\documentclass[10pt]{amsart}

\usepackage[T1]{fontenc}
\usepackage{lmodern,amsmath,amssymb,amsthm,mathtools}
\usepackage[margin=1in]{geometry}
\usepackage{tikz-cd,booktabs,array}
\usepackage[shortlabels]{enumitem}
\usepackage{xcolor,needspace}
\usepackage[colorlinks=true,linkcolor=blue,citecolor=blue,urlcolor=blue]{hyperref}

\newtheorem{theorem}{Theorem}[section]
\newtheorem{prop}[theorem]{Proposition}
\newtheorem{lemma}[theorem]{Lemma}
\newtheorem{corollary}[theorem]{Corollary}
\theoremstyle{definition}
\newtheorem{example}[theorem]{Example}
\newtheorem{question}[theorem]{Question}
\theoremstyle{remark}
\newtheorem{remark}[theorem]{Remark}

\newcommand{\nn}{\mathbb{N}}
\newcommand{\zz}{\mathbb{Z}}
\newcommand{\qq}{\mathbb{Q}}
\newcommand{\rr}{\mathbb{R}}
\newcommand{\cc}{\mathbb{C}}
\newcommand{\A}{\mathcal{A}}
\newcommand{\uu}{\mathcal{U}}
\newcommand{\gp}{\mathcal{G}}
\newcommand{\len}{\mathsf{L}}
\newcommand{\fac}{\mathsf{Z}}
\newcommand{\red}{\mathrm{red}}
\newcommand{\mon}{\mathcal{B}}
\newcommand{\exampleqed}{\begingroup\renewcommand{\qedsymbol}{\ensuremath{\blacksquare}}\qed\endgroup}
\DeclareMathOperator{\supp}{supp}
\DeclareMathOperator{\lcm}{lcm}

\title[Additive factorizations in finitely generated semidomains]{Additive factorizations in finitely generated semidomains}

\author{Timothy Chen}
\address{PRIMES-USA, Massachusetts Institute of Technology, Cambridge, MA 02139, USA}
\email{ctimothy@mit.edu}

\author{Felix Gotti}
\address{Department of Mathematics, Massachusetts Institute of Technology, Cambridge, MA 02139, USA}
\email{fgotti@mit.edu}

\author{Alan Yao}
\address{PRIMES-USA, Massachusetts Institute of Technology, Cambridge, MA 02139, USA}
\email{alanyao2008@gmail.com}

\date{October 28, 2026}

\keywords{Additive factorization, semidomain, atomic monoid, half-factorial monoid, finite factorization, ACCP}

\subjclass[2020]{Primary: 20M13; Secondary: 16Y60, 11R04}

\begin{document}

\begin{abstract}
	For finite $S \subseteq \cc$, we study additive factorizations in $\nn_0[S]$ and their behavior under adjoining generators. We construct rational and quadratic examples with infinitely many atoms but without atomicity, including a quadratic example in which every pure power of each generator is an atom. Adjoining generators algebraically independent over $\qq(S)$ preserves atomicity and the standard factorization properties, while unique factorization ascends from the individual monoids under joint linear disjointness for positive algebraic generators. Using the theorem of Evertse, Schlickewei, and Schmidt, we prove that positive real generators give finitely many factorizations of each prescribed length. Thus, bounded and finite factorization are equivalent, and half-factoriality implies finite factorization. We exhibit a set of two quadratic generators that separates half-factoriality from unique factorization and give an augmentation criterion for half-factoriality when every monomial is an atom. Lastly, we characterize complete integral closure for positive rational generators.
\end{abstract}

\maketitle


\bigskip
\section{Introduction}
\label{sec:introduction}

Additive factorizations in semiring extensions depend on the relations among products of their generators. Adjoining a generator can introduce new decompositions and destroy atomicity, even when the individual one-generator monoids are atomic. We investigate these effects for semirings generated by finitely many complex numbers, with particular emphasis on positive real generators.
\smallskip

Let $S = \{s_1, \ldots, s_n\}$ be a finite subset of $\cc$, and set
\[
	\nn_0[S] := \{f(s_1, \ldots, s_n) : f \in \nn_0[x_1, \ldots, x_n]\} \quad \text{and} \quad M_S := (\nn_0[S], +).
\]
Thus, $\nn_0[S]$ is the smallest subsemiring of $\cc$ containing $S$. This semiring is finitely generated, though $M_S$ need not be. We allow additive units and study factorizations up to associates.
\smallskip

The standard factorization properties form the following diagram, adapted from the framework of Anderson, Anderson, and Zafrullah~\cite{AAZ90}:

\begin{figure}[ht]
	\begin{tikzcd}
		\textbf{ UF } \ \arrow[r, Rightarrow] \arrow[red, r, Leftarrow, "/"{anchor=center,sloped}, shift left=1.7ex] \arrow[d, Rightarrow, shift right=1ex] \arrow[red, d, Leftarrow, "/"{anchor=center,sloped}, shift left=1ex] & \ \textbf{ HF } \arrow[d, Rightarrow, shift right=0.6ex] \arrow[red, d, Leftarrow, "/"{anchor=center,sloped}, shift left=1.3ex] \\
		\textbf{ FF } \ \arrow[r, Rightarrow] \arrow[red, r, Leftarrow, "/"{anchor=center,sloped}, shift left=1.7ex] & \ \textbf{ BF } \arrow[r, Rightarrow]  \arrow[red, r, Leftarrow, "/"{anchor=center,sloped}, shift left=1.7ex] & \textbf{ ACCP }  \arrow[r, Rightarrow] \arrow[red, r, Leftarrow, "/"{anchor=center,sloped}, shift left=1.7ex]  & \textbf{ Atomic }
	\end{tikzcd}
	\caption{The implications in the diagram show the known inclusions among six properties of general atomic monoids. The diagram also emphasizes (with red marked arrows) that none of these implications are reversible.}
	\label{fig:diagram}
\end{figure}
\smallskip

Here UF, HF, FF, and BF denote unique factorization, half-factoriality, finite factorization, and bounded factorization, respectively; ACCP denotes the ascending chain condition on principal ideals. Section~\ref{sec:background} gives their definitions.
\smallskip

Baeth, Chapman, and Gotti~\cite{BCG21} studied additive and multiplicative factorization properties of positive semirings simultaneously. We focus on the additive monoid and ask how its factorization properties depend on generators in the semiring.
\smallskip

Both factorization and divisibility in the one-generator theory have been well studied. For a single positive real generator, Correa-Morris and Gotti~\cite{CG22} proved that the existence of an atom implies atomicity, that ACCP, BF, and FF coincide, and that HF is equivalent to UF. Chen, Gotti, Lu, and Yao~\cite{CGLY26} studied antimatterness and valuation monoids in the algebraic case. Chapman, Gotti, and Gotti~\cite{CGG20} studied factorization invariants for positive rational one-generator semirings. We ask which implications persist for several generators, especially how adjoining generators affects atomicity.
\smallskip

For every finite $S \subseteq \rr_{>0}$, we prove that each element of $M_S$ has only finitely many factorizations of each prescribed length (Theorem~\ref{thm:fixed-length}). The proof applies the theorem of Evertse, Schlickewei, and Schmidt~\cite{ESS02} to sums of monomial atoms. Consequently, BF and FF are equivalent, and HF implies FF. For finite $S \subseteq \qq_{>0}$, elementary weight and chain arguments give more explicit criteria: ACCP, BF, and FF hold exactly when all generators are at least $1$, while HF and UF hold exactly when all generators are positive integers (Theorems~\ref{thm:rational-middle} and~\ref{thm:rational-top}).
\smallskip

Other one-generator conclusions, e.g., that the existence of an atom implies atomicity, no longer apply with two or more generators. For coprime integers $a, b \ge 2$, Theorem~\ref{thm:partial} shows that the atoms of $M_{\{a/b, a/(a + b)\}}$ are precisely the powers of $a/(a + b)$, so $a/b$ has no factorization, and hence $M_{\{a/b, a/(a + b)\}}$ is not atomic. By contrast, when $a > b$, the one-generator monoid $M_{\{a/b\}}$ is an FFM. Example~\ref{ex:axes} gives a quadratic monoid that is not atomic even though every pure power of each generator is an atom. Finally, Example~\ref{ex:quadratic-hf} shows that $M_{\{1 + \sqrt{2}, 1 + 2\sqrt{2}\}}$ is an HFM but not a UFM.
\smallskip

Conversely, we identify settings in which adjoining generators retains factorization properties. For complex generators, Theorem~\ref{thm:transfer} shows that adjoining elements algebraically independent over $\qq(S)$ preserves all six properties in Figure~\ref{fig:diagram}. For positive algebraic generators, Theorem~\ref{thm:uf-ascent} proves UF ascent when the individual monoids are UFMs and their fields are jointly linearly disjoint. Theorem~\ref{thm:augmentation} gives a separate test for half-factoriality when every monomial is an atom.
\smallskip

We also examine complete integral closure for positive rational generators. When $D$ is the least common multiple of their reduced denominators, Theorem~\ref{thm:cic} identifies the complete integral closure with $\zz[1/D]_{\ge 0}$ and shows that $M_S$ is completely integrally closed exactly when $1/D \in M_S$. This specializes the general closure formula for additive submonoids of $\qq_{\ge 0}$ from~\cite{CGG21}.
\smallskip

Section~\ref{sec:background} supplies the definitions. Section~\ref{sec:alg} explores the simplest case when the atoms act independently of one another. Section~\ref{sec:atoms} constructs the failures of atomicity, and Sections~\ref{sec:accp} and~\ref{sec:uf-hf} establish various factorization criteria. Sections~\ref{sec:laurent} and~\ref{sec:cic} treat Laurent evaluations and complete integral closure, respectively. Section~\ref{sec:questions} summarizes the chain of factorization implications in our class of monoids and poses our remaining questions.

\bigskip
\section{Background}
\label{sec:background}

We begin by introducing our terminology, as well as relevant theorems.

\medskip
\subsection{Notation and Semiring Evaluations}

We write $\nn$ for the positive integers, set $\nn_0 := \nn \cup \{0\}$, and use $\zz$, $\qq$, $\rr$, and $\cc$ for the integers, rationals, reals, and complex numbers. For $X \subseteq \rr$, the sets $X_{>0}$ and $X_{\ge 0}$ consist of its positive and nonnegative elements. Numerators and denominators of positive rationals are always coprime and positive; a \emph{unit fraction} has numerator $1$. Empty sums and products equal $0$ and $1$, respectively.
\smallskip

A \emph{semiring} has associative commutative addition, associative multiplication, identities $0$ and $1$, and the distributive laws, with $0$ absorbing under multiplication. A \emph{ring} is a semiring in which every element has an additive inverse. An \emph{integral domain} is a commutative ring with $1 \ne 0$ in which a product of nonzero elements is nonzero. A \emph{field} is a commutative ring with $1 \ne 0$ in which every nonzero element has a multiplicative inverse. Our evaluation semirings are commutative subsemirings of $\cc$, so we refer to them as \emph{semidomains}, i.e., subsemirings of integral domains. Meanwhile, \emph{positive} and \emph{rational} semirings are subsemirings of $\rr_{\ge 0}$ and $\qq$, respectively. A semiring is \emph{finitely generated} if it is the smallest subsemiring containing some finite set. We retain the definitions of $\nn_0[S]$ and $M_S$ from Section~\ref{sec:introduction}.
\smallskip

A \emph{ring homomorphism} preserves addition, multiplication, and $1$; its \emph{kernel} is the inverse image of $0$. For a commutative ring $R$, an \emph{ideal} is an additive subgroup $I$ such that $rI \subseteq I$ for every $r \in R$. The \emph{ideal generated by} $f_1, \ldots, f_d \in R$ is $(f_1, \ldots, f_d) := \{\sum_{i = 1}^d r_i f_i : r_1, \ldots, r_d \in R\}$. The \emph{quotient ring} $R/I$ consists of the cosets modulo $I$, with addition and multiplication induced by $R$.
\smallskip

For $S = \{s_1, \ldots, s_n\}$ and a coefficient ring or semiring $R \subseteq \cc$, the \emph{evaluation map} sends $f \in R[x_1, \ldots, x_n]$ to $f(S) := f(s_1, \ldots, s_n)$; its image is $R[S]$. When $R$ is a ring, its kernel consists of the polynomials vanishing at $S$. The \emph{augmentation map} evaluates a polynomial at $(1, \ldots, 1)$, or equivalently sums its coefficients. We write $\qq(S)$ for the smallest subfield of $\cc$ containing $\qq \cup S$. A \emph{monomial value} in $S$ is an element of
\[
	\mon_S := \{s_1^{e_1} \cdots s_n^{e_n} : (e_1, \ldots, e_n) \in \nn_0^n\}.
\]
We set $s^0 = 1$, including when $s = 0$. The set $\mon_S$ generates $M_S$ additively, though different exponent vectors may give the same value. A \emph{pure power} is a value $s_i^k$ with $k \in \nn_0$, and a \emph{monomial multiple} of $b \in \mon_S$ is a value $bc$ with $c \in \mon_S$. The \emph{support} of a polynomial or indexed family consists of the indices of its nonzero coefficients or coordinates. The \emph{coordinatewise order} on $\nn_0^n$ is defined by $\mathbf{e} \le \mathbf{f}$ if $e_i \le f_i$ for every $i$; a subset is \emph{downward closed} if it contains every vector below each of its elements.
\smallskip

A \emph{Laurent polynomial} over a coefficient ring or semiring $R$ is a finite sum $\sum_{k \in F} c_k X^k$ with $F \subseteq \zz$ finite and $c_k \in R$ for each $k \in F$. For $\alpha \in \rr_{>0}$, the associated \emph{Laurent evaluation monoid} is
\[
	L_\alpha := M_{\{\alpha, \alpha^{-1}\}} = \Big\{\sum_{k \in F} c_k \alpha^k : F \subseteq \zz \text{ is finite and } c_k \in \nn_0 \text{ for every } k \in F\Big\}.
\]
For an integer $D \ge 1$, we write $\zz[1/D] := \{m/D^k : m \in \zz,\ k \in \nn_0\}$. We use $\gcd$ and $\lcm$ for the positive greatest common divisor and least common multiple of nonempty finite sets of positive integers, and set $\lcm(\varnothing) := 1$.

\medskip
\subsection{Atomicity and Factorization}

A \emph{monoid} is a set with an associative operation and an identity, and a \emph{group} is a monoid in which every element has an inverse. Unless specified otherwise, monoids are commutative, cancellative, and written additively with identity $0$. Here \emph{cancellative} means that $x + z = y + z$ implies $x = y$. The \emph{Grothendieck group} $\gp(M)$ consists of formal differences of elements of $M$, and cancellation embeds $M$ in $\gp(M)$. For $M \subseteq (\cc, +)$, we identify $\gp(M)$ with $M - M$. The notation $\langle B \rangle$ denotes the submonoid generated by $B \subseteq M$, consisting of finite sums of elements of $B$, including the empty sum.
\smallskip

The \emph{group of units} of a monoid $M$ is
\[
	\uu(M) := \{u \in M : -u \in M\}.
\]
Elements $x, y \in M$ are \emph{associates} if $x - y \in \uu(M)$. The \emph{reduced monoid} $M_{\red} := M/\uu(M)$ consists of the associate classes, with $[x] + [y] := [x + y]$. The monoid $M$ is \emph{reduced} if $\uu(M) = \{0\}$. An \emph{atom} is a nonunit $a \in M$ such that every decomposition $a = b + c$ with $b, c \in M$ has a unit summand. We denote the set of atoms by $\A(M)$.
\smallskip

The \emph{free commutative monoid} on $B$ consists of formal sums $\sum_{b \in B} n_b b$ with $n_b \in \nn_0$ and finite support, with equality determined by coefficients. Let $\fac(M)$ be the free commutative monoid on $\A(M_{\red})$, and let $\pi_M \colon \fac(M) \to M_{\red}$ send each sum to its value. A \emph{factorization} of $x \in M$ is an element of $\fac_M(x) := \pi_M^{-1}([x])$. For $z = \sum_a n_a a \in \fac(M)$, its \emph{length} is $|z| := \sum_a n_a$, and the \emph{set of lengths} of $x$ is
\[
	\len_M(x) := \{|z| : z \in \fac_M(x)\}.
\]
Factorizations are counted up to permutation and associates. Every unit has only the empty factorization, of length $0$.
\smallskip

A monoid is \emph{atomic} if every nonunit has a factorization, and it is \emph{antimatter} if it has no atoms. An atomic monoid $M$ is a \emph{unique factorization monoid} (UFM), a \emph{half-factorial monoid} (HFM), a \emph{finite factorization monoid} (FFM), or a \emph{bounded factorization monoid} (BFM) if, for every $x \in M$, the set $\fac_M(x)$ is a singleton, the set $\len_M(x)$ is a singleton, the set $\fac_M(x)$ is finite, or the set $\len_M(x)$ is finite, respectively. In particular, atomicity is part of each of these four definitions.
\smallskip

For $x, y \in M$, we say that $y$ \emph{divides} $x$, and write $y \mid_M x$, if $x - y \in M$. The \emph{principal ideal} generated by $x$ is $x + M := \{x + m : m \in M\}$. The \emph{ascending chain condition on principal ideals} (ACCP) requires every ascending chain of principal ideals to stabilize.
\smallskip

The implications UF to HF and FF, and HF and FF to BF, follow from the definitions. If $M$ has the BF property and $x_0 + M \subseteq x_1 + M \subseteq \cdots$, write $x_i = x_{i + 1} + d_i$ with $d_i \in M$. If infinitely many $d_i$ are not units, then we would obtain factorizations of $x_0$ of unbounded length by factoring $x_k$ and the nonunits among $d_0, \ldots, d_{k - 1}$. Hence BF implies ACCP. Under the ACCP, a nonunit $x_0$ without a factorization would split as $x_0 = x_1 + y_1$ with both summands nonunits and, say, $x_1$ without a factorization. Iterating would yield $x_0 + M \subsetneq x_1 + M \subsetneq \cdots$, contradicting the ACCP. Thus, ACCP implies atomicity.
\smallskip

Factorization properties pass between $M$ and $M_{\red}$ by definition. In particular, every group is both atomic and antimatter and, having trivial reduction, satisfies all six properties in Figure~\ref{fig:diagram}. For $S \subseteq \rr_{>0}$, the monoid $M_S$ is reduced and nontrivial, so it cannot be both atomic and antimatter.
\smallskip

A \emph{monoid homomorphism} is a map preserving addition and $0$. For monoids and rings, a bijective homomorphism is an \emph{isomorphism}, and the symbol $\cong$ indicates that such a map exists. An \emph{automorphism} is an isomorphism from an object to itself. An \emph{additive weight} on $M$ is a homomorphism $w \colon M \to \rr_{\ge 0}$, and a \emph{length homomorphism} is a homomorphism $\ell \colon M \to \nn_0$ taking the value $1$ on every atom. For a family $(M_i)_{i \in I}$, its \emph{direct sum} $\bigoplus_{i \in I} M_i$ consists of the tuples with finite support, with coordinatewise addition.
\smallskip

The \emph{complete integral closure} of $M$ in $\gp(M)$ is
\[
	\widehat{M} := \{x \in \gp(M) : \text{there exists } c \in M \text{ such that } c + nx \in M \text{ for every } n \in \nn\}.
\]
The monoid $M$ is \emph{completely integrally closed} if $\widehat{M} = M$.

\medskip
\subsection{Algebraic and Valuation Preliminaries}

An element $\alpha \in \cc$ is \emph{algebraic} over a subfield $K \subseteq \cc$ if a nonzero polynomial in $K[X]$ vanishes at $\alpha$, and \emph{transcendental} otherwise; we omit the subfield $K$ when $K = \qq$. A polynomial is \emph{monic} if its leading coefficient is $1$. A nonconstant polynomial in $K[X]$ is \emph{irreducible} over $K$ if it is not a product of two nonconstant polynomials in $K[X]$. The \emph{minimal polynomial} of an algebraic element $\alpha$ is the unique monic irreducible polynomial in $K[X]$ vanishing at $\alpha$; its roots are the \emph{conjugates} of $\alpha$ over $K$. An algebraic number is \emph{quadratic} if its minimal polynomial over $\qq$ has degree $2$. A \emph{field embedding} is an injective ring homomorphism between fields.
\smallskip

A subset of a vector space over $K$ is \emph{linearly independent} if every finite $K$-linear relation among its elements has all coefficients zero. Its \emph{span} consists of its finite $K$-linear combinations; a linearly independent spanning set is a \emph{basis}, whose cardinality is the \emph{dimension} of the vector space. The \emph{degree} $[E : K]$ of a field extension is the dimension of $E$ over $K$. A finite set $T = \{\tau_1, \ldots, \tau_d\}$ is \emph{algebraically independent} over $K$ if no nonzero polynomial in $K[x_1, \ldots, x_d]$ vanishes at $T$. Algebraic independence therefore implies linear independence of the corresponding monomials. For algebraic numbers $\alpha_1, \ldots, \alpha_d$, the fields $\qq(\alpha_1), \ldots, \qq(\alpha_d)$ are \emph{jointly linearly disjoint} over $\qq$ if
\[
[\qq(\alpha_1, \ldots, \alpha_d) : \qq] = \prod_{i = 1}^d [\qq(\alpha_i) : \qq].
\]
The fields are \emph{pairwise linearly disjoint} if this degree equality holds for each pair; this need not imply joint linear disjointness when $d \ge 3$.
\smallskip

For a prime $p$ and a nonzero rational number $x$, the \emph{$p$-adic valuation} $v_p(x)$ is the unique integer $k$ such that $x = p^k a/b$ with integers $a, b$ not divisible by $p$; we set $v_p(0) := \infty$. It satisfies $v_p(xy) = v_p(x) + v_p(y)$ and the \emph{ultrametric inequality}
\[
	v_p(x + y) \ge \min\{v_p(x), v_p(y)\},
\]
with equality when the two valuations differ. More generally, a \emph{valuation} on a field, as used here, is a map to $\rr \cup \{\infty\}$ that is finite on nonzero elements, sends $0$ to $\infty$, and satisfies these multiplicative and ultrametric rules.
\smallskip

For instance, in Example~\ref{ex:axes}, we use the extension of $v_3$ to $\qq(\sqrt{3})$ given by
\[
	v_3(x + y\sqrt{3}) := \min\{v_3(x), v_3(y) + 1/2\} \qquad (x, y \in \qq).
\]
To verify multiplicativity, observe that for $x + y\sqrt{3} \ne 0$ the displayed value equals $v_3(x^2 - 3y^2)/2$: the finite valuations of $x^2$ and $3y^2$ have different parity, so they cannot cancel. The expression $x^2 - 3y^2$ is multiplicative as a function of $x + y\sqrt{3}$, and the ultrametric inequality follows by applying the rational inequality to the two coordinates. This valuation restricts to the rational $3$-adic valuation and satisfies $v_3(\sqrt{3}) = 1/2$.

\medskip
\subsection{Linear Equations in Multiplicative Groups}

Let $\Gamma$ be a subgroup of $\cc^\times := \cc \setminus \{0\}$ under multiplication, and let $c_1, \ldots, c_k \in \cc^\times$. Following~\cite{ESS02}, a solution $(u_1, \ldots, u_k) \in \Gamma^k$ of the equation
\[
	c_1 u_1 + \cdots + c_k u_k = 1
\]
is \emph{nondegenerate} if $\sum_{i \in I} c_i u_i \ne 0$ for every nonempty subset $I \subseteq \{1, \ldots, k\}$.

\begin{theorem}[Evertse--Schlickewei--Schmidt] \label{thm:ess}
	Let $\Gamma$ be a finitely generated subgroup of $\cc^\times$, and $c_1, \ldots, c_k \in \cc^\times$. Then the equation $c_1 u_1 + \cdots + c_k u_k = 1$ has only finitely many nondegenerate solutions $(u_1, \ldots, u_k) \in \Gamma^k$.
\end{theorem}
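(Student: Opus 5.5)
The paper does not prove Theorem~\ref{thm:ess}; it quotes it from~\cite{ESS02} and uses it as a black box, in particular for Theorem~\ref{thm:fixed-length}. What follows is how I would reconstruct the qualitative finiteness statement, proceeding by induction on $k$. Only finiteness is needed, so I would not try to recover the uniform bound in~\cite{ESS02}.

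The base case $k = 1$ is immediate: $c_1u_1 = 1$ forces $u_1 = c_1^{-1}$.

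\textbf{Step 1: reduce to a finitely generated field.} Choose generators $\gamma_1, \ldots, \gamma_r$ of $\Gamma$. Let $K := \qq(\gamma_1, \ldots, \gamma_r, c_1, \ldots, c_k)$. This is a finitely generated field containing every coefficient and every possible solution.

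\textbf{Step 2: the number field case.} First assume $K$ is a number field. Let $T$ be a finite set of places of $K$ that contains the archimedean places and all places where some $\gamma_j$ or $c_i$ is not a unit. Then $\Gamma$ lies in the $T$-unit group $\mathcal{O}_T^\times$, so it suffices to treat $T$-unit solutions.

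Here I would apply the $p$-adic Subspace Theorem of Schmidt and Schlickewei. For each place $v \in T$, choose $k$ linearly independent linear forms from among $X_1, \ldots, X_k$ and $c_1X_1 + \cdots + c_kX_k$. Evaluated at $\mathbf{x} = (u_1, \ldots, u_k)$, the product of these forms over $v \in T$ is small compared with a power of the height of $\mathbf{x}$. This uses the product formula for $T$-units together with the relation $\sum c_iu_i = 1$.

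Taking the finitely many possible choices of forms into account, the Subspace Theorem puts all solutions into finitely many proper linear subspaces of $K^k$. On each such subspace there is a nontrivial relation $\sum a_iu_i = 0$. Nondegeneracy lets me combine this relation with the original equation to eliminate one unknown. This produces, for a suitable partition of the indices, equations of the form $\sum_{i \in I} c_i' (u_i/u_{i_0}) = 1$ with fewer than $k$ terms. The new unknowns are ratios lying in $\Gamma$, and the nondegenerate subsums are handled by induction.

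The delicate bookkeeping is this. A nondegenerate solution of the original equation yields, on each subspace, finitely many ratio classes $u_i/u_j$. Taken together, these ratios pin down the tuple, and then $\sum c_iu_i = 1$ fixes its scale.

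\textbf{Step 3: the general case by specialization.} When $K$ is transcendental over $\qq$, write $K$ as a finite extension of $\qq(t_1, \ldots, t_m)$ and use a specialization argument. Pick a finitely generated $\zz$-algebra $R \subseteq K$ containing the $\gamma_j^{\pm1}$ and the $c_i^{\pm1}$. Then choose ring homomorphisms $\phi \colon R \to \overline{\qq}$ that are injective on $\Gamma$, so that the image $\phi(\Gamma)$ is again a finitely generated group. Each nondegenerate solution over $K$ maps to a solution over a number field.

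The main obstacle is arranging that nondegeneracy and distinctness survive specialization. A vanishing subsum, or a collision of two solutions, after applying $\phi$ would break the argument. Because there are infinitely many candidate solutions, one cannot simply avoid finitely many bad polynomials.

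My first attempt would be the function-field route instead. I would use the Mason--Brownawell--Masser (abc) bounds to show that nondegenerate solutions have bounded height with respect to the transcendental variables. This confines them to finitely many classes modulo constants, and reduces the problem to the algebraic closure of $\qq$ in $K$, where Step 2 applies.

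If that fails, I would follow the approach of Evertse and Győry and specialize in a way that preserves multiplicative independence of the generators of $\Gamma$. I would then argue by contradiction, assuming infinitely many nondegenerate solutions and extracting an infinite family over the number field. This is the point where I would defer to the literature rather than reproduce the argument in full.
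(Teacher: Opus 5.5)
The paper gives no proof of this statement. It quotes \cite[Theorem~1.1]{ESS02}, whose bound depends only on $k$ and the rank of $\Gamma$ and holds over any algebraically closed field of characteristic~$0$. As a self-contained argument, your proposal has a genuine gap at Step~3. That case cannot be set aside: Theorem~\ref{thm:fixed-length} applies the statement to arbitrary finite $S \subseteq \rr_{>0}$, including transcendental generators as in Example~\ref{ex:hf}. The real difficulty is not preserving nondegeneracy but keeping infinitely many solutions apart, and there are two standard ways to handle it.

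\emph{Keep the uniformity you chose to discard.} Suppose that over number fields there are at most $A(k,r)$ nondegenerate solutions whenever $\operatorname{rank}\Gamma \le r$. Given $A(k,r)+1$ solutions over $K$, you need only a specialization $\phi \colon R \to \overline{\qq}$ that keeps these finitely many tuples distinct and nondegenerate. That is finitely many nonvanishing conditions, which the Nullstellensatz satisfies after inverting their product, and $\operatorname{rank}\phi(\Gamma) \le r$ is automatic. So your objection that one cannot avoid finitely many bad polynomials disappears. Proving such a bound is, however, the main content of \cite{ESS02}, so this route amounts to the paper's citation.

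\emph{Qualitative route.} You need a specialization injective on all of $\Gamma$. Your phrase about preserving multiplicative independence points to this, but it is a genuine lemma needing arithmetic input, since it imposes one condition for each nontrivial element of $\Gamma$. Granting it, degenerate specializations do no harm if you induct on $k$. Split $\sum_i \phi(c_i)\phi(u_i) - 1 = 0$ into minimal vanishing subsums. The block containing $-1$ has a second term, so for some nonempty $J$ the tuple $(\phi(u_i))_{i \in J}$ is a nondegenerate solution of $\sum_{i \in J}\phi(c_i)x_i = 1$ over a number field. By Step~2 and injectivity, infinitely many solutions then share a coordinate $u_i = \lambda$. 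Nondegeneracy gives $1 - c_i\lambda \ne 0$ when $k \ge 2$. Dividing $\sum_{l \ne i} c_lu_l = 1 - c_i\lambda$ by this nonzero number yields a nondegenerate equation in $k-1$ unknowns over $K$.

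Step~2 is the standard Subspace Theorem argument (Evertse; van der Poorten--Schlickewei) and is sound, except for the claim that the ratios found on a subspace ``pin down the tuple.'' A relation $\sum a_iu_i = 0$ splits into minimal vanishing subsums, and induction controls only the ratios inside each block. Indices in other blocks, or outside the support of $(a_i)$, stay free. Finish instead with the elimination you mention first. Infinitely many solutions share one ratio $u_i = \lambda u_j$ with $i \ne j$. Substituting produces the coefficient $c_j + \lambda c_i$, which is nonzero because $c_iu_i + c_ju_j \ne 0$. Every subsum of the new equation is a subsum of the old one, so induction on $k$ applies.

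Your function-field alternative also skips work. The Brownawell--Masser bound is a one-variable statement over an algebraically closed constant field, so you would need induction on the transcendence degree. Moreover, after you factor off the bounded-height parts, the coefficients are no longer constants. The equation for the constant parts then becomes a linear system whose solutions can be degenerate, which again needs the block argument above.
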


This is a special case of~\cite[Theorem~1.1]{ESS02}, which gives an explicit upper bound on the number of such solutions and applies to any algebraically closed field of characteristic $0$.

\bigskip
\section{Algebraic Considerations}
\label{sec:alg}

We identify the Grothendieck group and additive units of $M_S$ and then examine how $M_S$ and its factorization properties behave when new generators are adjoined to~$S$.

\medskip
\subsection{Groups and Units}

We first compute the Grothendieck group and determine when additive units make the entire evaluation monoid a group.

\begin{prop} \label{prop:group}
	Let $S$ be a finite subset of $\cc$. Then $\gp(M_S) = \zz[S]$, and the following conditions are equivalent.
	\begin{enumerate}[(a)]
		\item $1 \in \uu(M_S)$.
		\smallskip
		
		\item $-1 \in M_S$.
		\smallskip
		
		\item $M_S = \zz[S]$.
	\end{enumerate}
	If every element of $S$ is algebraic then these conditions are also equivalent to the condition that $M_S$ is not reduced.
\end{prop}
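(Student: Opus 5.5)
We prove the group identity first, then the cycle of implications (a)$\Rightarrow$(b)$\Rightarrow$(c)$\Rightarrow$(a), and finally the extra equivalence in the algebraic case.

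\emph{The Grothendieck group.} Since $M_S \subseteq \zz[S]$ and $\zz[S]$ is an additive group, we get $\gp(M_S) = M_S - M_S \subseteq \zz[S]$. For the reverse inclusion, take $x = f(S)$ with $f \in \zz[x_1, \ldots, x_n]$. Write $f = f_+ - f_-$, where $f_+, f_- \in \nn_0[x_1, \ldots, x_n]$ collect the positive and negative coefficients. Then $x = f_+(S) - f_-(S) \in M_S - M_S$.

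\emph{The equivalences.} For (a)$\Rightarrow$(b), note that $1 \in \uu(M_S)$ means precisely $-1 \in M_S$. For (b)$\Rightarrow$(c), the key point is that $M_S = \nn_0[S]$ is closed under multiplication. So if $-1 \in M_S$, then $-m = (-1)m \in M_S$ for every $m \in M_S$. Hence $M_S$ is a group, and it coincides with $\gp(M_S) = \zz[S]$. For (c)$\Rightarrow$(a), if $M_S = \zz[S]$ then $-1 \in M_S$, so $1$ is a unit. Finally, (a) implies that $M_S$ is not reduced, since $1 \ne 0$.

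\emph{Non-reduced implies (b) when $S$ is algebraic.} This is the only substantive step. Suppose $u \in \uu(M_S)$ with $u \ne 0$. We use two observations.
\begin{enumerate}[(i)]
\item $\uu(M_S)$ is closed under multiplication by elements of $M_S$: if $m \in M_S$, then both $um$ and $-um = (-u)m$ lie in $M_S$. In particular, every power $u^k$ with $k \ge 1$ is a unit.
\item $\uu(M_S)$ is a group, so it contains every $\zz$-linear combination of the powers $u, u^2, \ldots$.
\end{enumerate}
Since every element of $S$ is algebraic, so is $u \in \zz[S]$. Clearing denominators in its minimal polynomial over $\qq$ gives integers $c_0, \ldots, c_d$ with
\[
c_d u^d + \cdots + c_1 u + c_0 = 0 \quad \text{and} \quad c_0 \ne 0,
\]
where $c_0 \ne 0$ because the minimal polynomial is irreducible and $u \ne 0$. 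Then $c_0 = -(c_d u^d + \cdots + c_1 u)$ lies in $\uu(M_S)$ by (i) and (ii). Hence $-|c_0| \in M_S$. Since $\nn_0 \subseteq M_S$, we conclude
\[
-1 = -|c_0| + (|c_0| - 1) \in M_S,
\]
which is (b).

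The main obstacle is this last step: passing from an arbitrary nonzero unit to a nonzero integer unit. Algebraicity is exactly what supplies the integer relation with nonzero constant term.
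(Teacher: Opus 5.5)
Your proof is correct and follows essentially the same route as the paper: you split coefficients by sign to get $\gp(M_S) = \zz[S]$, use multiplicative closure of $M_S$ for (b)$\Rightarrow$(c), and in the algebraic case turn an integer relation with nonzero constant term satisfied by a nonzero unit $u$ into $-|c_0| \in M_S$, then add $|c_0| - 1$. The only differences are cosmetic: you pass through the group $\uu(M_S)$ instead of normalizing $a_0 > 0$ and checking each $a_k u^k \in M_S$ directly, and you explicitly justify $c_0 \neq 0$ via irreducibility of the minimal polynomial, which the paper leaves implicit.
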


\begin{proof}
	Splitting integer coefficients into positive and negative parts gives $M_S - M_S = \zz[S]$. Conditions~(a) and~(b) are equivalent by the definition of a unit. If $-1 \in M_S$ then $-x = (-1)x \in M_S$ for every $x \in M_S$, proving~(c). Conversely,~(c) gives $-1 \in M_S$.
	
	Suppose that $S$ consists of algebraic numbers and $0 \ne u \in \uu(M_S)$. Choose an integer polynomial $a_0 + a_1 X + \cdots + a_d X^d$ vanishing at $u$ with $a_0 > 0$. For $1 \le k \le d$, both $u^k$ and $-u^k = (-u)u^{k - 1}$ belong to $M_S$, so $a_k u^k \in M_S$. Hence $-a_0 = \sum_{k = 1}^d a_k u^k \in M_S$, and adding $a_0 - 1$ gives $-1 \in M_S$.
\end{proof}

As the following example demonstrates, the algebraicity hypothesis in Proposition~\ref{prop:group} is necessary.

\begin{example} \label{ex:units}
	Let $t$ be transcendental. Uniqueness of polynomial representations in $t$ yields
	\[
		\nn_0[t, -t] = \nn_0 + t\zz[t] \quad \text{and} \quad \uu(M_{\{t, -t\}}) = t\zz[t].
	\]
	Both signs of each positive power of $t$ occur, while the constant coefficient is nonnegative. Thus, the monoid is neither reduced nor a group, but its reduction is $\nn_0$, so it satisfies all six factorization properties from Figure~\ref{fig:diagram}. \exampleqed
\end{example}

The next criterion detects the group case from the conjugates of a single generator.

\begin{prop} \label{prop:conjugates}
	If $S$ contains a nonzero algebraic number with no positive real conjugate then $M_S$ is a group.
\end{prop}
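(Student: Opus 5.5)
By Proposition~\ref{prop:group}, it suffices to show that $-1 \in M_S$. Let $\alpha \in S$ be the nonzero algebraic number with no positive real conjugate. Since $\nn_0[\alpha] \subseteq \nn_0[S]$, it is enough to find $-1$ in $M_{\{\alpha\}}$. The plan is to use a classical theorem on polynomials with nonnegative coefficients (a P\'olya--Meissner type result): a polynomial $g \in \qq[X]$ with no positive real root has a nonzero multiple with nonnegative coefficients. Equivalently, for every polynomial $p$ we can find a polynomial $h$ with nonnegative coefficients and $h \equiv p \pmod g$.

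Let $g$ be the minimal polynomial of $\alpha$ over $\qq$. By hypothesis $g$ has no positive real roots. Since $\alpha \neq 0$, also $g(0) \neq 0$. I would proceed in three steps.

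\textbf{Step 1: $-1$ as a value at $\alpha$ of a polynomial with nonnegative coefficients.} I would invoke, or sketch, Meissner's theorem: a real polynomial with no roots in $\rr_{>0}$ divides some nonzero polynomial with nonnegative coefficients. The sketch goes as follows.
\begin{itemize}
\item Factor $g$ over $\rr$ into linear factors $X + c$ with $c \ge 0$ and irreducible quadratics $X^2 - 2r\cos\theta\, X + r^2$ with $0 < \theta < \pi$.
\item Each linear factor already has nonnegative coefficients.
\item For a quadratic factor, multiply by $(X^{N}+\dots)$ in the standard way. For example, $X^{2m} - 2r^m\cos(m\theta)X^m + r^{2m}$ is a multiple of it, and one chooses $m$ with $\cos(m\theta) \le 0$ when $\theta$ is not a rational multiple of $\pi$. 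The cleanest route is P\'olya's theorem: for $g$ with no roots in $\rr_{\ge 0}$, the polynomial $(1+X)^N g(X)$ has positive coefficients for large $N$.
\end{itemize}
Here I would handle a root at $0$ separately, but that cannot occur since $g(0) \neq 0$.

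To get a multiple with \emph{rational} coefficients, I would apply P\'olya's theorem in the form: there is $q$ with $q\,g$ having strictly positive coefficients. Then I perturb $q$ to a rational polynomial, which keeps positivity because the condition is open.

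\textbf{Step 2: producing $-1$.} Let $P = q g \in \qq[X]$ have positive coefficients, and scale it to lie in $\zz[X]$ with $P(0) = a_0 > 0$. Since $P(\alpha) = 0$, we get
\[
-a_0 = \sum_{k \ge 1} a_k \alpha^k \in M_{\{\alpha\}}.
\]
Adding $a_0 - 1$ gives $-1 \in M_S$. This mirrors the last paragraph of the proof of Proposition~\ref{prop:group}.

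\textbf{Main obstacle.} The hard step is the existence of the positive multiple with rational coefficients. P\'olya's theorem gives a real multiplier directly from the factorization. To rationalize it, I would note that $(1+X)^N g$ is already rational when $g$ is rational. So P\'olya's form, applied to $g$ itself, avoids rationalization entirely.

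The subtlety is that P\'olya's theorem, in its homogeneous form $(x+y)^N G(x,y)$, requires $G > 0$ on the simplex, i.e.\ $g(t) > 0$ for $t \ge 0$ together with a positive leading coefficient. Since $g$ has no roots on $[0,\infty)$, it has constant sign there, so we may replace $g$ by $\pm g$. The remaining boundary issue is at infinity: we need the degree-$d$ homogenization to be positive at $(1,0)$, i.e.\ the leading coefficient is positive, which the sign choice ensures. The point $(0,1)$ requires $g(0) \neq 0$, which we have.
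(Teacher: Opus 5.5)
Your proof is correct, and its outer structure is the paper's. You produce a polynomial with positive integer coefficients vanishing at $\alpha$, read off $-c_0 \in M_S$, add $c_0 - 1$, and invoke Proposition~\ref{prop:group}. The difference lies in where that polynomial comes from.

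The paper cites Dubickas's theorem that an algebraic number with no positive real conjugate is a root of a polynomial with positive rational coefficients. You instead apply P\'olya's theorem (Meissner's one-variable case) directly to the minimal polynomial $g$. Since $g$ is monic and irreducible with $\alpha \neq 0$, we have $g(0) \neq 0$. Since $g$ has no roots in $\rr_{>0}$, it is positive on $[0,\infty)$. Hence its homogenization is positive on the closed simplex, including the endpoints $(0,1)$ and $(1,0)$, and $(1+X)^N g(X)$ has strictly positive coefficients for large $N$.

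What your route buys is an explicit multiplier and rationality for free, since $(1+X)^N \in \zz[X]$. It also shows exactly where the hypotheses $\alpha \neq 0$ and ``no positive real conjugate'' are used. The paper's citation is shorter but hides these points.

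You should trim Step~1. The real-factorization sketch with the $\cos(m\theta)$ trick yields only real multipliers and products with gaps (nonnegative, not strictly positive, coefficients). It would therefore not survive a perturbation to rational coefficients. The perturbation of a real $q$ is in any case superseded once you use $(1+X)^N$. The clean argument is simply this: P\'olya gives $P = (1+X)^N g \in \qq[X]$ with all coefficients positive and $P(\alpha)=0$; clear denominators and finish as in your Step~2.
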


\begin{proof}
	Let $\alpha \in S$ have no positive real conjugate. Since $\alpha \ne 0$, none of its conjugates is zero. A theorem of Dubickas~\cite{aD07}, after clearing denominators, gives positive integers $c_0, \ldots, c_d$ with $c_0 + c_1 \alpha + \cdots + c_d \alpha^d = 0$. Hence $-c_0 \in M_S$, and adding $c_0 - 1$ gives $-1 \in M_S$, whence we apply Proposition~\ref{prop:group}.
\end{proof}

The converse holds in the one-generator case. The case of $\alpha=0$ or $\alpha$ transcendental are clear, and if $\beta$ were a positive conjugate of $\alpha$, then $M_{\{\alpha\}} \cong M_{\{\beta\}}$ would be a nontrivial positive monoid and therefore not a group. However, it fails with two or more generators, as the following example demonstrates.

\begin{example} \label{ex:units-algebraic}
	Replace the transcendental $t$ in Example~\ref{ex:units} with $\sqrt{2}$. Then we find that $\nn_0[\sqrt{2}, -\sqrt{2}]$ contains $\sqrt{2}(-\sqrt{2}) + 1 = -1$ and is thus a group. However, both $\sqrt{2}$ and $-\sqrt{2}$ have a positive real conjugate. \exampleqed
\end{example}

\medskip
\subsection{Algebraic Independence and Disjointness}

Algebraic independence excludes polynomial relations and therefore gives unique additive factorizations into monomial values.

\begin{prop} \label{prop:independent}
	If $S$ is algebraically independent over $\qq$ then $M_S$ is the free commutative monoid on the monomial basis $\mon_S$. In particular, $\A(M_S) = \mon_S$, and $M_S$ is a UFM.
\end{prop}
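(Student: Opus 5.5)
The plan is to show that the evaluation map from $\nn_0[x_1,\ldots,x_n]$ to $\nn_0[S]$ is an isomorphism of additive monoids, and then to read off the atoms and unique factorization directly.

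First, since $S = \{s_1, \ldots, s_n\}$ is algebraically independent over $\qq$, no nonzero polynomial in $\qq[x_1,\ldots,x_n]$ vanishes at $S$. As noted in the background, this means that distinct exponent vectors give distinct monomial values, since a coincidence $s^{\mathbf{e}} = s^{\mathbf{f}}$ with $\mathbf{e} \neq \mathbf{f}$ would give the nonzero polynomial $x^{\mathbf{e}} - x^{\mathbf{f}}$ vanishing at $S$. It also means that the monomial values are $\qq$-linearly independent. Now suppose $f(S) = g(S)$ for $f, g \in \nn_0[x_1,\ldots,x_n]$. Then $f - g \in \zz[x_1,\ldots,x_n]$ vanishes at $S$, so $f = g$. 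Hence the evaluation map is injective on $\nn_0[x_1,\ldots,x_n]$, and it is surjective by the definition of $\nn_0[S]$. Because it preserves addition and $0$, it is a monoid isomorphism from the free commutative monoid on the monomials $x^{\mathbf{e}}$ onto $M_S$. Composing with the bijection $\mathbf{e} \mapsto s^{\mathbf{e}}$ shows that $M_S$ is free on $\mon_S$.

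Second, in a free commutative monoid $\bigoplus_{b \in \mon_S} \nn_0 b$ the following hold. The only unit is $0$, because coefficients are nonnegative, so $M_S$ is reduced. The atoms are exactly the basis elements: an element with total coefficient sum $1$ admits no decomposition into two nonzero summands, while an element with coefficient sum at least $2$ splits nontrivially. Here the coefficient sum is a well-defined length homomorphism because of freeness. Thus $\A(M_S) = \mon_S$. Every nonzero element is a finite sum of basis elements, so $M_S$ is atomic. Uniqueness of coefficients gives that the factorization is unique, so $M_S$ is a UFM.

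There is one minor point to check. If some $s_i = 0$, then $S$ is not algebraically independent, because $x_i$ vanishes at $S$. So this case is excluded, and no degenerate identifications such as $0^0 = 1$ arise. The main step is the injectivity argument, which follows immediately from passing from $\nn_0$-coefficients to $\zz$-coefficients and applying algebraic independence. The remaining steps are routine facts about free commutative monoids.
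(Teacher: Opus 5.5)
Your proof is correct and follows essentially the same route as the paper: the evaluation map from $\nn_0[x_1,\ldots,x_n]$ is surjective, and it is injective because $f(S)=g(S)$ yields the integer relation $f-g$ vanishing at $S$; atoms and unique factorization then follow from freeness. Your extra remarks, that distinct exponent vectors give distinct monomial values and that $0\notin S$, only make explicit details the paper leaves implicit.
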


\begin{proof}
	The evaluation map $\nn_0[x_1, \ldots, x_n] \to \nn_0[S]$ is surjective. Equal evaluations give an integer polynomial relation, so algebraic independence makes the map injective. Hence $M_S$ is isomorphic to the additive monoid of $\nn_0[x_1, \ldots, x_n]$, which is free on its monomials, and these correspond to $\mon_S$.
\end{proof}

More generally, adjoining independent generators preserves the factorization properties of an arbitrary multi-generator monoid.

\begin{theorem} \label{thm:transfer}
	Let $A$ be a finite subset of $\cc$, and let $T = \{\tau_1, \ldots, \tau_d\}$ with $d \ge 1$ be algebraically independent over $\qq(A)$. Then
	\[
		M_{A \cup T} \cong \bigoplus_{\mathbf{e} \in \nn_0^d} M_A.
	\]
	Moreover, each of atomicity, the ACCP, BF, FF, HF, and UF holds for $M_{A \cup T}$ if and only if it holds for $M_A$.
\end{theorem}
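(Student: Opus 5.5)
The plan is to establish the isomorphism first and then read off the factorization properties from general facts about direct sums of copies of one monoid.

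For the isomorphism, define $\Phi\colon \bigoplus_{\mathbf{e}\in\nn_0^d} M_A \to M_{A\cup T}$ by $(x_{\mathbf{e}})_{\mathbf{e}} \mapsto \sum_{\mathbf{e}} x_{\mathbf{e}} \tau^{\mathbf{e}}$, where $\tau^{\mathbf{e}} := \tau_1^{e_1}\cdots\tau_d^{e_d}$. It is clearly additive.

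\emph{Surjectivity.} Every element of $\nn_0[A\cup T]$ is a polynomial over $\nn_0$ in the variables of $A$ and $T$. Grouping its monomials by their $T$-exponent vector writes it as $\sum_{\mathbf{e}} f_{\mathbf{e}}(A)\tau^{\mathbf{e}}$ with each $f_{\mathbf{e}}(A) \in M_A$.

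\emph{Injectivity.} Suppose $\sum x_{\mathbf{e}}\tau^{\mathbf{e}} = \sum y_{\mathbf{e}}\tau^{\mathbf{e}}$. Then $\sum (x_{\mathbf{e}}-y_{\mathbf{e}})\tau^{\mathbf{e}} = 0$ with coefficients in $\qq(A)$. Algebraic independence of $T$ over $\qq(A)$ gives linear independence of the monomials $\tau^{\mathbf{e}}$ over $\qq(A)$, so $x_{\mathbf{e}} = y_{\mathbf{e}}$ for all $\mathbf{e}$.

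It therefore suffices to show that, for a monoid $M$ and a nonempty (here countably infinite) index set $I$, the direct sum $N := \bigoplus_{i\in I} M$ has each of the six properties if and only if $M$ does. The basic structural facts are the following.
\begin{itemize}
\item[(i)] $\uu(N) = \bigoplus \uu(M)$ and $N_{\red} \cong \bigoplus M_{\red}$.
\item[(ii)] $\A(N)$ consists exactly of the tuples with one coordinate an atom of $M$ and all others units. Indeed, if two coordinates are nonunits, splitting the tuple along them gives a decomposition into two nonunits. If one coordinate is a nonunit that is not an atom, splitting that coordinate does the same.
\item[(iii)] Consequently, $\fac_N(x) \cong \prod_{i\in \supp} \fac_M(x_i)$, where the product runs over the finitely many coordinates of $x$ that are nonunits. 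Lengths add: $\len_N(x) = \sum_i \len_M(x_i)$ as a sumset.
\end{itemize}

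With these facts, the forward transfers (from $M$ to $N$) go as follows.
\begin{itemize}
\item \emph{Atomicity, FF, BF.} These pass to $N$ because finite products of nonempty finite sets, and finite sumsets of nonempty finite sets, are again nonempty and finite.
\item \emph{UF and HF.} These pass because products of singletons and sums of singletons are singletons.
\item \emph{ACCP.} Given an ascending chain $x^{(0)} + N \subseteq x^{(1)} + N \subseteq \cdots$, each coordinate satisfies $x^{(k+1)}_i \mid_M x^{(k)}_i$. Coordinates outside the finite set of nonunit coordinates of $x^{(0)}$ remain units forever, since a divisor of a unit is a unit. On the finitely many remaining coordinates, ACCP in $M$ stabilizes each coordinate chain up to units. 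Hence the chain in $N$ stabilizes.
\end{itemize}

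The converse transfers (from $N$ to $M$) follow because $M$ embeds as a coordinate summand $\iota\colon M \to N$. By (ii) and (iii), $\iota$ identifies $\fac_M(x)$ with $\fac_N(\iota(x))$ and preserves lengths. A chain in $M$ maps to a chain in $N$, and $\iota(x)\mid_N \iota(y)$ holds if and only if $x \mid_M y$, so ACCP also descends.

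The main obstacle is the bookkeeping around units, since $M_A$ need not be reduced; for example, it may be a group by Proposition~\ref{prop:group}. The cleanest route is to pass to reduced monoids at the outset using (i). Factorization properties depend only on the reduced monoid, and ACCP is likewise insensitive to units. After that reduction, (ii) and (iii) become the standard statements for direct sums of reduced monoids. One edge case needs care: if $M_A$ is a group, both sides are groups and all six properties hold trivially.
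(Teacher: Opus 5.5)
Your proposal is correct and follows essentially the same route as the paper. The isomorphism comes from grouping monomials by their $T$-exponent vector and using the linear independence of the monomials $\tau^{\mathbf{e}}$ over $\qq(A)$. The paper then reduces to the reduced case, identifies the atoms of the direct sum as single-coordinate atoms so that factorizations and lengths decompose coordinatewise, and handles the ACCP through the finitely many nonunit coordinates of $x^{(0)}$; your version does the same, just spelling out the unit bookkeeping and the converse via the coordinate embedding in more detail.
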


\begin{proof}
	Let $\tau^{\mathbf{e}} := \tau_1^{e_1} \cdots \tau_d^{e_d}$ for $\mathbf{e} \in \nn_0^d$. Every element of $M_{A \cup T}$ is a finite sum $\sum_{\mathbf{e}} a_{\mathbf{e}} \tau^{\mathbf{e}}$ with $a_{\mathbf{e}} \in M_A$. Algebraic independence makes these coefficients unique, proving the isomorphism.
	
	Reduction preserves these properties and commutes with direct sums, so assume that $M_A$ is reduced. An element of the direct sum is an atom if and only if it has exactly one nonzero coordinate and that coordinate is an atom. Factorizations therefore correspond to choices of factorizations in the finitely many nonzero coordinates, and lengths add. This proves the assertions for atomicity, BF, FF, HF, and UF.
	
	For the ACCP, first denote the reduced direct sum by $M$ and then consider the chain $x_0 + M \subseteq x_1 + M \subseteq \cdots$ in $M$. Each $x_i$ divides $x_0$, so its support is contained in $\supp(x_0)$. If $M_A$ satisfies the ACCP then the chains stabilize in these finitely many coordinates, and hence in $M$. Conversely, a nonstabilizing chain in one coordinate gives one in $M$.
\end{proof}

For algebraic generators, the property we now need is joint linear disjointness to ensure that every rational polynomial relation follows from the individual minimal polynomials.

\begin{prop} \label{prop:kernel}
	Let $A = \{\alpha_1, \ldots, \alpha_d\}$ be a set of algebraic numbers, and for each $i \in \{1, \ldots, d\}$ let $m_i$ be the minimal polynomial of $\alpha_i$ and $e_i$ its degree. If
	\[
		[\qq(A) : \qq] = \prod_{i = 1}^d e_i,
	\]
	i.e., the $\qq(\alpha_i)$ are jointly linearly disjoint, then the kernel of the evaluation map $\qq[x_1, \ldots, x_d] \to \qq(A)$ is the ideal $(m_1(x_1), \ldots, m_d(x_d))$.
\end{prop}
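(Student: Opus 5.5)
The plan is to compare dimensions. Let $I := (m_1(x_1), \ldots, m_d(x_d))$ and let $\phi \colon \qq[x_1, \ldots, x_d] \to \qq(A)$ be the evaluation map. Since $m_i(\alpha_i) = 0$ for each $i$, we have $I \subseteq \ker \phi$, so $\phi$ factors through a surjection
\[
	\bar\phi \colon \qq[x_1, \ldots, x_d]/I \to \qq[A].
\]
First I will verify that the image is $\qq[A] = \qq(A)$, because each $\alpha_i$ is algebraic. The goal is then to show that $\bar\phi$ is injective by comparing $\qq$-dimensions.

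For the upper bound on the domain, I will use the division algorithm for the monic polynomial $m_i(x_i)$ in the variable $x_i$, carried out one variable at a time. This reduces every polynomial modulo $I$ to one whose degree in $x_i$ is less than $e_i$ for each $i$. Hence the monomials $x_1^{k_1} \cdots x_d^{k_d}$ with $0 \le k_i < e_i$ span $\qq[x_1, \ldots, x_d]/I$, and so
\[
	\dim_\qq \qq[x_1, \ldots, x_d]/I \le \prod_{i=1}^d e_i.
\]
The codomain has dimension $[\qq(A) : \qq] = \prod_i e_i$ by hypothesis. A surjective linear map from a space of dimension at most $N$ onto a space of dimension $N$ must be injective, and the spanning set must in fact be a basis. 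Therefore $\ker \bar\phi = 0$, which gives $\ker \phi = I$.

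The only step needing care is the reduction argument. I must make sure that reducing modulo $m_i(x_i)$ in $x_i$ does not raise the degrees in the other variables. This holds because $m_i$ involves only $x_i$ and is monic, so the remainder's coefficients in the other variables are just those of the original polynomial, recombined. I expect no real obstacle beyond this. In particular, I will not need to know in advance that the monomials in the $\alpha_i$ are linearly independent, since that conclusion comes out of the dimension count rather than being an input to it.
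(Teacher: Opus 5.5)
Your proposal is correct and follows essentially the same route as the paper: you note that $I \subseteq \ker$, bound the quotient's dimension via the reduced monomials $x_1^{k_1}\cdots x_d^{k_d}$ with $0 \le k_i < e_i$, and conclude from the degree hypothesis that the induced surjection onto $\qq(A)$ is an isomorphism. The one small difference is that the paper asserts directly that these monomials form a basis of the quotient, whereas you prove only that they span and obtain linear independence from the dimension count. That makes your version slightly more self-contained.
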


\begin{proof}
	The ideal $(m_1(x_1), \ldots, m_d(x_d))$ lies in the kernel. The quotient of $\qq[x_1, \ldots, x_d]$ by this ideal has a basis of monomials $x_1^{r_1} \cdots x_d^{r_d}$ with $0 \le r_i < e_i$ for every $i$, hence dimension $\prod_{i = 1}^d e_i$. The induced surjection onto $\qq(A)$ is an isomorphism by the degree hypothesis.
\end{proof}

Proposition~\ref{prop:kernel} describes what rational polynomial relations exist in such monoids, but does not by itself imply ascent of any factorization properties. However, Theorem~\ref{thm:uf-ascent} uses the same degree condition to prove UF ascent.

\bigskip
\section{Atoms and Failures of Atomicity}
\label{sec:atoms}

We describe the atoms of $M_S$ and construct monoids that have atoms but are not atomic. Throughout this section, generators are positive real numbers unless stated otherwise. We begin by reducing the search for atoms to the search for monomial values.

\begin{lemma} \label{lem:atoms}
	Let $S$ be a finite subset of $\rr_{>0}$. Then the following statements hold.
	\begin{enumerate}[(1)]
		\item Every atom of $M_S$ is a monomial value.
		\smallskip
		
		\item $\A(M_S)$ is nonempty if and only if $1 \in \A(M_S)$.
		\smallskip
		
		\item If a monomial is not an atom of $M_S$ then no monomial multiple of it is an atom of $M_S$.
	\end{enumerate}
\end{lemma}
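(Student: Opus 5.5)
The three parts rest on two simple facts. First, $M_S$ is additively generated by the monomial values $\mon_S$. Second, since $S \subseteq \rr_{>0}$, the monoid $M_S$ is reduced (as noted in Section~\ref{sec:background}) and every element of $\mon_S$ is positive. Throughout I will use that a nonzero element $x \in M_S$ is an atom exactly when it admits no decomposition $x = y + z$ with $y, z \in M_S \setminus \{0\}$.

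For (1), let $a \in \A(M_S)$. Write $a = b_1 + \cdots + b_k$ with $b_j \in \mon_S$; here $k \ge 1$ because $a \ne 0$, since $0$ is a unit. If $k \ge 2$, then $a = b_1 + (b_2 + \cdots + b_k)$ is a sum of two nonzero elements, because all monomial values are positive. This contradicts atomicity, so $k = 1$ and $a = b_1 \in \mon_S$.

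For (3), I will argue by contraposition using multiplicative closure. Let $b \in \mon_S$ be a non-atom. Since $b > 0$ it is not a unit, so $b = y + z$ with $y, z \in M_S$ nonzero. For any $c \in \mon_S$ we then have $bc = yc + zc$. Here $yc, zc \in M_S$ because $\nn_0[S]$ is a semiring and $c \in \nn_0[S]$. Both are nonzero because $c > 0$ and $y, z > 0$. Hence $bc$ is not an atom.

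For (2), the backward direction is trivial. For the forward direction, suppose $\A(M_S) \ne \varnothing$ and pick an atom $a$. By (1), $a \in \mon_S$, and we can write $a = 1 \cdot a$, so $a$ is a monomial multiple of $1 = s_1^0 \cdots s_n^0 \in \mon_S$. If $1$ were not an atom, then (3) would say that $a$ is not an atom, a contradiction. Hence $1 \in \A(M_S)$.

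I do not anticipate a genuine obstacle; the only care needed is to use positivity so that decompositions into monomial summands have nonzero parts. Reducedness is what makes ``nonunit'' equivalent to ``nonzero'' in these arguments.
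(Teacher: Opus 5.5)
Your proposal is correct and essentially matches the paper's proof. Parts (1) and (3) are argued identically, using positivity and multiplying a nontrivial splitting by a monomial. For (2) you apply (3) to the monomial $1$ together with (1), which is exactly the paper's direct argument $b = bu + bv$ routed through (3).
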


\begin{proof}
	(1) A polynomial expression with at least two terms, counted with multiplicity, splits into two positive elements. Hence every atom is a monomial value.
	\smallskip
	
	(2) If $1 = u + v$ with $u, v \in M_S \setminus \{0\}$ then every monomial $b$ splits as $b = bu + bv$, so~(1) gives $\A(M_S) = \varnothing$. Conversely, $1 \in \A(M_S)$ implies $\A(M_S) \ne \varnothing$.
	\smallskip
	
	(3) If $b = u + v$ with $u, v \in M_S \setminus \{0\}$ then $bc = uc + vc$ for every monomial $c$.
\end{proof}

Thus, the exponent vectors of monomial atoms form a downward-closed subset of $\nn_0^n$, even when distinct vectors give the same value. However, determining the exact set of atoms remains challenging even for rational generators. We first give a valuation proof of the standard classification for a single rational generator (see, for example,~\cite{CGG20}).

\begin{lemma} \label{lem:rational}
	Let $q = a/b$, where $a, b \in \nn$ are coprime. Then the following statements hold.
	\begin{enumerate}[(1)]
		\item If $b = 1$ then $M_{\{q\}} = \nn_0$.
		\smallskip
		
		\item If $a = 1 < b$ then $M_{\{q\}}$ is antimatter.
		\smallskip
		
		\item If $a, b > 1$ then $\A(M_{\{q\}}) = \{q^k : k \in \nn_0\}$, and so $M_{\{q\}}$ is atomic.
	\end{enumerate}
\end{lemma}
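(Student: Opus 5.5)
Parts (1) and (2) are immediate, so the weight of the argument falls on part~(3), where the plan is to use the $b$-adic (prime-by-prime) valuation.

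For (1), if $b = 1$ then $q = a \in \nn$, and every monomial value $q^k$ is a positive integer. Hence $M_{\{q\}} \subseteq \nn_0$. Since $1 \in M_{\{q\}}$, equality holds.

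For (2), if $q = 1/b$ with $b > 1$, then $1 = b \cdot (1/b) = q + \cdots + q$ ($b$ summands) is a decomposition of $1$ into nonzero nonunits. So $1 \notin \A(M_{\{q\}})$, and Lemma~\ref{lem:atoms}(2) gives $\A(M_{\{q\}}) = \varnothing$.

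For (3), Lemma~\ref{lem:atoms}(1) says every atom is some $q^k$. The monoid is reduced since it lies in $\rr_{\ge 0}$. It therefore suffices to show each $q^k$ is an atom; atomicity then follows because every nonzero element is a finite sum of the $q^k$.

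Suppose $q^k = \sum_{j \in F} c_j q^j$ with $c_j \in \nn_0$ and at least two summands counted with multiplicity. We aim for a contradiction by comparing the least common denominator and the highest-degree term. Multiplying through by $b^N$, where $N = \max(F \cup \{k\})$, gives the integer identity
\[
a^k b^{N-k} = \sum_{j} c_j a^j b^{N-j}.
\]
First, a size comparison: a term $j = k$ with coefficient $c_k \ge 1$ forces all other summands to vanish and $c_k = 1$, which is the trivial decomposition. So $k \notin F$ for a nontrivial decomposition.

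If $N > k$, every term with $j < N$ is divisible by $b$, as is the left side. Hence $b \mid c_N a^N$, so $b \mid c_N$, and therefore $c_N \ge b$. Now choose a prime $p \mid b$ and apply $v_p$ to the original rational identity. The minimum valuation $-N v_p(b)$ on the right must not occur uniquely unless the left side matches, since the left side has valuation $-k\,v_p(b) > -N v_p(b)$. Iterating this argument down from the top degree is fiddly. A cleaner route is to use the other prime direction, modulo $a$.

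Let $m = \min F$. If $m < k$, work modulo $a$ (note $a > 1$). Every term with $j > m$ is divisible by $a^{m+1}$, and the left side is divisible by $a^{k} \supseteq a^{m+1}$. Dividing the integer identity by $a^m$, the relation reduces modulo $a$ to $0 \equiv c_m b^{N-m}$. Since $\gcd(a,b) = 1$, this gives $a \mid c_m$.

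Symmetrically, if $N > k$, then $b \mid c_N$.

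These divisibility facts alone do not immediately contradict anything. The key step, which I expect to be the main obstacle, is to use them to perform a reduction. The relation $a \cdot q^m = b \cdot q^{m+1}$ lets us replace $a$ copies of $q^m$ by $b$ copies of $q^{m+1}$; this increases $m$ while keeping the sum fixed. Similarly, $b$ copies of $q^N$ can be traded for $a$ copies of $q^{N-1}$. Repeating these trades, terminating by induction on $N - m$, pushes all terms to degree $k$. That yields $q^k = c\,q^k$ with $c \ge 2$ unless the decomposition has exactly one summand, because each trade replaces at least one summand by at least one summand and so preserves having at least two. The result is a contradiction, since $q^k \neq 0$.
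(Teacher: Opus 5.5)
Parts~(1) and~(2) match the paper. Your argument for part~(3) is correct in outline, but it takes a different route from the paper's and needs two small repairs.

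The paper uses the order together with a single valuation. Every summand in a nontrivial decomposition of $q^k$ is strictly smaller than $q^k$, so all exponents satisfy $j>k$ when $q<1$ and $j<k$ when $q>1$. Choosing a prime $p\mid a$ in the first case and $p\mid b$ in the second, every summand then has $p$-adic valuation strictly larger than $v_p(q^k)$, contradicting the ultrametric inequality. This is the valuation argument you set aside as ``fiddly''. The missing observation was that the size comparison puts all exponents on one side of $k$, not merely that $k\notin F$.

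Your replacement argument works differently: it combines the congruences $a\mid c_m$ and $b\mid c_N$ with the exchanges $a\,q^m=b\,q^{m+1}$. It never uses the real order, and in effect shows that every relation among powers of $q$ is generated by these exchanges. That is more information than the paper extracts, but the route is longer.

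The two repairs are these.

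First, ``each trade replaces at least one summand by at least one summand and so preserves having at least two'' is not a valid justification. For $q=1/b$, the same top trade turns $1=b\cdot q$ into the single summand $1$. What you need is $a,b\ge 2$. Trading all $c_m$ copies of $q^m$, which is legitimate because $a\mid c_m$, leaves $bc_m/a\ge b\ge 2$ copies of $q^{m+1}$. Trading all $c_N$ copies of $q^N$ leaves $ac_N/b\ge a\ge 2$ copies of $q^{N-1}$. So at least two summands survive every trade.

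Second, induction on $N-m$ with $m=\min F$ does not control the case where all exponents exceed $k$. For example, $F=\{k+5\}$ has $N-m=0$ while $F\neq\{k\}$. Induct instead on $\max(F\cup\{k\})-\min(F\cup\{k\})$, which strictly decreases with each trade and vanishes only when $F=\{k\}$. The divisibility facts must be re-derived at each stage for the new representation of $q^k$, which your argument already allows.
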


\begin{proof}
	Part~(1) follows from $q \in \nn$, and part~(2) follows from $1 = b \cdot (1/b)$ and Lemma~\ref{lem:atoms}. For part~(3), fix $k \in \nn_0$ and suppose, by way of contradiction, that $q^k$ is a sum of at least two powers of $q$, counted with multiplicity. Each summand $q^j$ is smaller than $q^k$. If $q < 1$, choose a prime $p \mid a$. Then $j > k$, so each summand has valuation $j v_p(a) > k v_p(a) = v_p(q^k)$. If $q > 1$, choose a prime $p \mid b$. Then $j < k$, so each summand has valuation $-j v_p(b) > -k v_p(b) = v_p(q^k)$. In either case, the ultrametric inequality gives $v_p(q^k) > v_p(q^k)$, a contradiction. Thus, every power of $q$ is an atom, and these powers generate $M_{\{q\}}$.
\end{proof}

By contrast, even when the individual one-generator monoids are atomic, their joint semiring can have no additive atoms.

\begin{example} \label{ex:join}
	Both $M_{\{2/3\}}$ and $M_{\{3/4\}}$ are atomic by Lemma~\ref{lem:rational}. The monoid $M_{\{2/3, 3/4\}}$ contains $(2/3)(3/4) = 1/2$, and so every $x \in M_{\{2/3, 3/4\}} \setminus \{0\}$ decomposes as
	\[
		x = \frac{x}{2} + \frac{x}{2},
	\]
	where both summands belong to $M_{\{2/3, 3/4\}}$. Thus, this monoid is antimatter. \exampleqed
\end{example}

Adjoining a generator can also destroy atomicity while leaving infinitely many atoms. The following family separates the existence of atoms from atomicity.

\begin{theorem} \label{thm:partial}
	Let $a, b \ge 2$ be coprime integers, and set $\alpha := a/b$ and $\beta := a/(a + b)$. Then
	\[
		\A(M_{\{\alpha, \beta\}}) = \{\beta^k : k \in \nn_0\},
	\]
	and $M_{\{\alpha, \beta\}}$ is not atomic, although both $M_{\{\alpha\}}$ and $M_{\{\beta\}}$ are atomic.
\end{theorem}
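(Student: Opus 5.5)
We prove the three assertions in turn: $\alpha$ is not an atom, every $\beta^k$ is an atom, and $\alpha$ has no factorization. All three rest on the identity
\[
	\alpha = \beta + \alpha\beta,
\]
which holds because $\beta(1+\alpha) = \frac{a}{a+b}\cdot\frac{a+b}{b} = \alpha$.

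\emph{$\alpha$ is not an atom.} The identity writes $\alpha$ as a sum of two nonzero elements, so $\alpha \notin \A(M_{\{\alpha,\beta\}})$. By Lemma~\ref{lem:atoms}(3), no monomial multiple $\alpha^i\beta^j$ with $i \ge 1$ is an atom either. By Lemma~\ref{lem:atoms}(1), the atoms are therefore among the pure powers $\beta^k$.

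\emph{Each $\beta^k$ is an atom.} Suppose $\beta^k = \sum_t m_t$ is a sum of at least two monomials $m_t = \alpha^{i_t}\beta^{j_t}$. Fix a large integer $N$. Iterating the identity gives
\[
	\alpha = \beta + \beta^2 + \cdots + \beta^{N} + \alpha\beta^{N}.
\]
Substitute this into each factor $\alpha$ of each $m_t$ and expand. This rewrites the relation as
\[
	\beta^k = \sum_{j} d_j \beta^j + R, \qquad d_j \in \nn_0,
\]
where $R$ is a nonnegative combination of monomials $\alpha^{i}\beta^{j}$ with $j \ge N$. Every term on the right is positive, so the sum contains at least two terms. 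Because $\beta<1$, we have $\beta^j > \beta^k$ for $j<k$, which forces $d_j = 0$ for $j<k$. Also $d_k = 0$: otherwise the remaining positive terms would have to sum to $0$, which they cannot.

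Now fix a prime $p \mid a$. Every monomial $\alpha^i\beta^j$ has $p$-adic valuation $(i+j)v_p(a)$. Hence every term on the right has valuation at least $(k+1)v_p(a)$, once $N > k$. The ultrametric inequality then gives $v_p(\beta^k) \ge (k+1)v_p(a) > k\,v_p(a) = v_p(\beta^k)$, a contradiction. The step needing the most care is the bookkeeping in this substitution, in particular checking that the expanded sum still has at least two positive terms and contains no $\beta^k$ term. This is exactly what the size comparison using $\beta<1$ handles.

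\emph{Failure of atomicity, and atomicity of the one-generator monoids.} Since the atoms are exactly the $\beta^k$, a factorization of $\alpha$ would express $a/b$ as a finite sum of powers of $\beta$. Every such sum lies in $\zz[1/(a+b)]$. But $\gcd(b, a+b) = \gcd(b,a) = 1$ and $b \ge 2$, so $a/b \notin \zz[1/(a+b)]$. Hence $\alpha$ has no factorization and $M_{\{\alpha,\beta\}}$ is not atomic. Finally, $\alpha = a/b$ and $\beta = a/(a+b)$ are reduced fractions whose numerators and denominators all exceed $1$: we have $a, b \ge 2$, and $\gcd(a, a+b) = 1$ with $a+b > 1$. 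Lemma~\ref{lem:rational}(3) therefore shows that both $M_{\{\alpha\}}$ and $M_{\{\beta\}}$ are atomic.
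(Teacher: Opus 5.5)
Your proof is correct and follows the paper's argument: the identity $\alpha = \beta + \alpha\beta$ combined with Lemma~\ref{lem:atoms}(3), a $p$-adic valuation for a prime $p \mid a$ with the ultrametric inequality, and the obstruction $a/b \notin \zz[1/(a+b)]$. The only difference is that you substitute the iterated identity before comparing sizes, and that bookkeeping is unnecessary: since $\alpha \ge \beta$ and $\beta < 1$, every monomial with $i + j \le k$ satisfies $\alpha^i\beta^j \ge \beta^{i+j} \ge \beta^k$, so every summand in a nontrivial decomposition of $\beta^k$ already has $i + j > k$, hence valuation exceeding $k\,v_p(a)$, which is how the paper argues.
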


\begin{proof}
	Since $b < a + b$, we have $0 < \beta < \alpha$, and $\beta < 1$. We also have
	\[
		\alpha = \beta(1 + \alpha) = \beta + \alpha\beta.
	\]
	Multiplying by $\alpha^{i - 1}\beta^j$ shows that every monomial $\alpha^i\beta^j$ with $i \ge 1$ is not an atom.
	
	Fix $n \in \nn_0$ and a prime $p \mid a$. Coprimality gives $v_p(\alpha^i \beta^j) = (i + j) v_p(a)$. A monomial $\alpha^i\beta^j < \beta^n$ must satisfy $i + j > n$: otherwise, $\alpha \ge \beta$ and $\beta < 1$ give $\alpha^i\beta^j \ge \beta^{i + j} \ge \beta^n$. Every monomial summand in a nontrivial decomposition of $\beta^n$ would therefore have valuation greater than $n v_p(a) = v_p(\beta^n)$, contradicting the ultrametric inequality. Hence every power of $\beta$ is an atom. No monomial involving $\alpha$ equals a power of $\beta$, since its reduced denominator is divisible by $b$. Lemma~\ref{lem:atoms} now gives $\A(M_{\{\alpha, \beta\}}) = \{\beta^k : k \in \nn_0\}$.
	
	All sums of atoms lie in $\zz[1/(a + b)]$, whereas $\alpha = a/b$ does not, since $b > 1$ and $\gcd(b, a + b) = 1$. Thus, $\alpha$ has no factorization. Both one-generator monoids are atomic by Lemma~\ref{lem:rational}, since their reduced numerators exceed~$1$.
\end{proof}

Two choices of parameters show that coprime denominators do not ensure atomicity and that atomicity can fail after adjoining a generator to an FFM.

\begin{example} \label{ex:two-thirds-two-fifths}
	For $(a, b) = (2, 3)$, Theorem~\ref{thm:partial} gives $\A(M_{\{2/3, 2/5\}}) = \{(2/5)^k : k \in \nn_0\}$, although $M_{\{2/3, 2/5\}}$ is not atomic. Thus, coprime denominators, even with numerator primes dividing neither denominator, do not guarantee atomicity.
	
	For $(a, b) = (3, 2)$, the theorem gives $\A(M_{\{3/2, 3/5\}}) = \{(3/5)^k : k \in \nn_0\}$, again without atomicity. Since $M_{\{3/2\}}$ has the FF property by Theorem~\ref{thm:ge1}, adjoining one rational generator can destroy atomicity while leaving infinitely many atoms. \exampleqed
\end{example}

In Theorem~\ref{thm:partial}, positive powers of one generator cease to be atoms. Over a quadratic field, atomicity can fail even when all pure powers of both generators remain atoms.

\begin{example} \label{ex:axes}
	Set
	\[
		\alpha := \frac{70}{11\sqrt{3}}, \qquad \beta := \frac{\sqrt{3}}{5}, \quad \text{and} \quad r := \alpha\beta = \frac{14}{11}.
	\]
	Consider the monoid $M_{\{\alpha,\beta\}}$. Since $\beta^2 = 3/25$, we have $1 + \beta^2 + r\beta^2 = (275 + 33 + 42)/275 = r$. Multiplying this identity by $\alpha^{i - 1}\beta^{j - 1}$ shows that every monomial $\alpha^i\beta^j$ with $i, j \ge 1$ is not an atom.
	
	The valuation from Section~\ref{sec:background} gives $v_3(\alpha^i\beta^j) = (j - i)/2$ for all $i, j \in \nn_0$. Since $r > 1$ and $\alpha > 1 > \beta$, the inequality $\alpha^i\beta^j = r^i\beta^{j - i} < \beta^m$ implies $\beta^{j - i} < \beta^m$, hence $j - i > m$. Every monomial summand in a nontrivial decomposition of $\beta^m$ would have valuation greater than $m/2 = v_3(\beta^m)$, contradicting the ultrametric inequality. Similarly, $\alpha^i\beta^j = r^j\alpha^{i - j} < \alpha^n$ implies $i - j < n$, so each summand in a nontrivial decomposition of $\alpha^n$ would have valuation greater than $-n/2 = v_3(\alpha^n)$. Hence
	\[
		\A(M_{\{\alpha, \beta\}}) = \{\alpha^n : n \in \nn_0\} \cup \{\beta^m : m \in \nn_0\}.
	\]
	
	However, $r$ has no factorization: since $\alpha > r$, no positive power of $\alpha$ can occur, while sums of powers of $\beta$ lie in $\zz[1/5] + \sqrt{3}\,\zz[1/5]$. The rational elements of this set belong to $\zz[1/5]$, which does not contain $r = 14/11$. \exampleqed
\end{example}

The preceding results distinguish atomicity from the existence of atoms. Below, we also show that it differs from the atomhood of all monomials.

\begin{remark} \label{rem:atomic}
	For positive generators, the following implications hold:
	\[
		\text{every monomial is an atom} \ \Longrightarrow \ M_S \text{ is atomic} \ \Longrightarrow \ 1 \in \A(M_S) \ \Longleftrightarrow \ \A(M_S) \ne \varnothing.
	\]
	Neither of the two reverse implications holds. Atomicity does not force all pure powers to be atoms, as $M_{\{2, 3\}} = \nn_0$ shows. Meanwhile, in Example~\ref{ex:axes}, every pure power of each generator is an atom, but the monoid is not atomic.

    However, if $1$ is an atom of $M_S$, then $1$ must be an atom of every $M_{\{s_i\}}$, and thus each such monoid is atomic. For $s_i \ge 1$, this follows from Theorem~\ref{thm:ge1}. For $0 < s_i < 1$, every nontrivial decomposition of $s_i^k$ uses powers with exponents greater than $k$; dividing by $s_i^k$ would decompose $1$. Hence every power of $s_i$ is an atom. Example~\ref{ex:join} shows that the converse fails: atomicity of each $M_{\{s_i\}}$ does not make $1$ an atom of $M_S$.
\end{remark}

\bigskip
\section{ACCP, Bounded Factorization, and Finite Factorization}
\label{sec:accp}

We prove that BF and FF coincide for finite sets of positive real generators, give elementary weight criteria, and characterize the ACCP, BF, and FF properties for finite sets of positive rational generators.

\begin{theorem} \label{thm:fixed-length}
	Let $S$ be a finite subset of $\rr_{>0}$. Then the following statements hold.
	\begin{enumerate}[(1)]
		\item For every $x \in M_S$ and $k \in \nn_0$, the set $\{z \in \fac_{M_S}(x) : |z| = k\}$ is finite.
		\smallskip
		
		\item The monoid $M_S$ is a BFM if and only if it is an FFM.
		\smallskip
		
		\item If $M_S$ is an HFM then it is an FFM.
	\end{enumerate}
\end{theorem}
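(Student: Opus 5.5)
The plan is to reduce part~(1) to a finiteness statement for a single unit equation and then apply Theorem~\ref{thm:ess}. Parts~(2) and~(3) will follow formally from~(1).

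Since $S \subseteq \rr_{>0}$, the monoid $M_S$ is reduced (as noted in Section~\ref{sec:background}). So $\fac(M_S)$ is the free commutative monoid on $\A(M_S)$ itself, and a factorization of $x$ of length $k$ is just a multiset $\{b_1, \ldots, b_k\}$ of atoms with $b_1 + \cdots + b_k = x$. By Lemma~\ref{lem:atoms}(1), every atom is a monomial value. Let $\Gamma$ be the multiplicative subgroup of $\cc^\times$ generated by $S$. It is finitely generated, and it contains $\mon_S$ and hence $\A(M_S)$.

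For part~(1), the case $k = 0$ is trivial: only the empty factorization has length $0$, and it occurs only for $x = 0$. So fix $x \ne 0$ and $k \ge 1$. Each factorization of $x$ of length $k$ yields, after choosing an ordering, a tuple $(b_1, \ldots, b_k) \in \Gamma^k$ satisfying
\[
	x^{-1} b_1 + \cdots + x^{-1} b_k = 1.
\]
This has the form of Theorem~\ref{thm:ess} with $c_1 = \cdots = c_k = x^{-1} \in \cc^\times$. Here the coefficients absorb $x$, which need not lie in $\Gamma$.

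The key point, and the only place where something could go wrong, is nondegeneracy. This is exactly where positivity is used. Every $b_i$ is a positive real and $x > 0$, so each subsum $\sum_{i \in I} x^{-1} b_i$ over a nonempty $I$ is positive, hence nonzero. Thus every solution coming from a factorization is nondegenerate. Theorem~\ref{thm:ess} then gives only finitely many such tuples. The map from factorizations to tuples, defined up to reordering, is injective, because a factorization is recovered as the multiset of the coordinates. Hence there are finitely many factorizations of length $k$.

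For part~(2), every FFM is a BFM, since finitely many factorizations give finitely many lengths. Conversely, if $M_S$ is a BFM then for each $x$ the set $\len_{M_S}(x)$ is finite. Then $\fac_{M_S}(x)$ is a finite union, over $k \in \len_{M_S}(x)$, of the sets in part~(1), each of which is finite. So $M_S$ is an FFM.

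For part~(3), an HFM is atomic with $|\len_{M_S}(x)| = 1$ for every $x$, so it is in particular a BFM. Part~(2) then makes it an FFM.
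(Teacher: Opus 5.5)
Your proposal is correct and follows the paper's proof essentially step for step. Lemma~\ref{lem:atoms}(1) places every atom in the finitely generated group $\Gamma$. Positivity makes each ordered length-$k$ factorization a nondegenerate solution of $x^{-1}u_1 + \cdots + x^{-1}u_k = 1$, so Theorem~\ref{thm:ess} gives finiteness, and parts~(2) and~(3) follow formally. The case $x = 0$, $k \ge 1$, which you pass over, is empty, since a sum of $k \ge 1$ positive atoms is positive.
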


\begin{proof}
	For~(1), reducedness handles $x = 0$ or $k = 0$, so assume $x > 0$ and $k \ge 1$. Let $\Gamma := \{s_1^{e_1} \cdots s_n^{e_n} : (e_1, \ldots, e_n) \in \zz^n\}$ be the multiplicative group generated by $S$. By Lemma~\ref{lem:atoms}, every atom lies in $\Gamma$, so each ordered length-$k$ factorization of $x$ gives a tuple $(a_1, \ldots, a_k) \in \Gamma^k$ satisfying
	\[
		\frac{a_1}{x} + \cdots + \frac{a_k}{x} = 1.
	\]
	Positivity makes each such tuple a nondegenerate solution, so Theorem~\ref{thm:ess} with $c_i=1/x$ and $u_i=a_i\in\Gamma$ gives finitely many ordered tuples, hence finitely many length-$k$ factorizations. If $M_S$ satisfies the BF property then its finite length sets and~(1) imply the FF property. The converse in~(2) follows from Figure~\ref{fig:diagram}, while the implication in~(3) follows from combining~(2) with Figure~\ref{fig:diagram}.
\end{proof}

To establish bounded factorization in concrete examples, we use additive weights that bound the number of nonzero summands. The following criterion also applies to reduced monoids outside the evaluation setting and is similar to~\cite[Proposition~4.5]{fG19}.

\begin{prop} \label{prop:weight}
	Let $M$ be a reduced monoid, and let $w \colon M \to \rr_{\ge 0}$ be a monoid homomorphism such that $w(x) \ge \delta$ for some $\delta > 0$ and every $x \in M \setminus \{0\}$. Then $M$ is a BFM, and every factorization of $x \in M$ has length at most $w(x)/\delta$. If, in addition, the set $\{a \in \A(M) : w(a) \le R\}$ is finite for every $R \in \rr_{\ge 0}$, then $M$ is an FFM.
\end{prop}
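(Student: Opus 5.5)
The plan is to follow the classical argument for BF via a length-bounding weight, in three steps.

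\emph{Step 1: the length bound.} Fix $x \in M$ and a factorization $z = a_1 + \cdots + a_k$ of $x$, where the atoms $a_i$ are taken in $M$; since $M$ is reduced, $M_{\red} = M$. Because $w$ is a homomorphism, $w(x) = \sum_{i=1}^k w(a_i)$. Each atom is a nonzero element, so $w(a_i) \ge \delta$, which gives $k\delta \le w(x)$, that is, $|z| \le w(x)/\delta$. This yields the length bound, and hence each $\len_M(x)$ is finite once atomicity is known.

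\emph{Step 2: atomicity.} This is the step requiring genuine care, since BF includes atomicity by definition. I would argue by contradiction. Suppose $x_0 \ne 0$ has no factorization. Then $x_0$ is not an atom, so $x_0 = x_1 + y_1$ with $x_1, y_1$ nonzero, and at least one summand, say $x_1$, has no factorization. Otherwise concatenating the two factorizations would factor $x_0$. Iterating produces $x_0 = x_m + y_1 + \cdots + y_m$ with every $y_j \ne 0$. Applying $w$ gives $w(x_0) \ge m\delta$ for every $m$, which is impossible. A cleaner route is to show the ACCP directly and invoke the implication ACCP $\Rightarrow$ atomic from Section~\ref{sec:background}. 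In a chain $x_0 + M \subseteq x_1 + M \subseteq \cdots$, write $x_i = x_{i+1} + d_i$. Then $w(x_0) \ge \sum_{i<m} w(d_i)$, so only finitely many $d_i$ are nonzero, which forces the chain to stabilize. Either argument also handles the case $x = 0$, which has only the empty factorization. So $M$ is atomic with bounded lengths, i.e., a BFM.

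\emph{Step 3: the FF statement.} Fix $x$, and note that any atom $a$ appearing in a factorization of $x$ divides $x$. Since $w$ takes values in $\rr_{\ge 0}$, we get $w(a) \le w(x)$. By hypothesis, the set $\{a \in \A(M) : w(a) \le w(x)\}$ is finite; call its size $N$. Every factorization of $x$ is therefore a formal sum of at most $w(x)/\delta$ atoms drawn from this $N$-element set. There are only finitely many such multisets, so $\fac_M(x)$ is finite, and $M$ is an FFM.
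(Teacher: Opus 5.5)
Your proof is correct and follows essentially the same route as the paper: the weight bounds the number of nonzero summands in any decomposition of $x$ by $w(x)/\delta$. This both bounds factorization lengths and forces repeated splitting of nonatoms to terminate in a factorization (your first argument in Step~2 is exactly the paper's argument), and the FF claim follows because every atom in a factorization of $x$ lies in the finite set $\{a \in \A(M) : w(a) \le w(x)\}$. Your alternative ACCP route to atomicity is also valid, but it is not needed.
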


\begin{proof}
	Every decomposition of $x$ into nonzero summands has at most $w(x)/\delta$ terms. Thus, repeatedly splitting nonatoms terminates in a factorization, proving BF and the stated bound. Under the additional hypothesis, these factorizations use the finite set $\{a \in \A(M) : w(a) \le w(x)\}$ and have bounded length, so there are only finitely many.
\end{proof}

When every generator is at least $1$, the usual real value provides such a weight, and bounded intervals contain only finitely many monomial values.

\begin{theorem} \label{thm:ge1}
	If $S$ is a finite subset of $[1, \infty)$ then $M_S \cap [0, R]$ is finite for every $R \in \rr_{\ge 0}$. In particular, $M_S$ is an FFM.
\end{theorem}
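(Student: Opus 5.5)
The plan is to bound the number of monomial values below $R$ and then apply Proposition~\ref{prop:weight} with the identity weight. Write $S = \{s_1, \ldots, s_n\}$ with each $s_i \ge 1$.

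\textbf{Monomials below $R$.} Split $S$ into the generators equal to $1$ and those strictly greater than $1$. Generators equal to $1$ contribute nothing new to monomial values, so every element of $\mon_S$ is of the form $s_{i_1}^{e_1} \cdots s_{i_m}^{e_m}$, where $s_{i_1}, \ldots, s_{i_m}$ are the generators exceeding $1$. Let $\lambda := \min\{s_{i_j}\} > 1$, or take no such generators if $m = 0$. A monomial with total exponent $e_1 + \cdots + e_m = N$ is at least $\lambda^N$. Hence a monomial at most $R$ has $N \le \log R / \log \lambda$ when $R \ge 1$, and there are no monomials at all when $R < 1$. Only finitely many exponent vectors satisfy this bound, so $\mon_S \cap [0, R]$ is finite.

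\textbf{Elements below $R$.} Every nonzero monomial value is at least $1$. So an element $x \in M_S$ with $x \le R$ is a sum of at most $\lfloor R \rfloor$ monomial values, each lying in the finite set $\mon_S \cap [0, R]$. There are finitely many such multisets, so $M_S \cap [0, R]$ is finite.

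\textbf{FF property.} $M_S$ is reduced, since it is a submonoid of $\rr_{\ge 0}$. Take $w$ to be the inclusion into $\rr_{\ge 0}$ and $\delta = 1$. This choice is valid because every nonzero element of $M_S$ is a nonempty sum of monomials, each at least $1$, so $w(x) \ge 1$ for every $x \ne 0$. The atoms with $w(a) \le R$ lie in the finite set $M_S \cap [0, R]$. Proposition~\ref{prop:weight} therefore gives that $M_S$ is an FFM, and it is atomic in particular.

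There is no real obstacle. The only point needing care is that generators equal to $1$ give infinitely many exponent vectors all with value $1$. The argument counts values rather than exponent vectors, so this causes no problem.
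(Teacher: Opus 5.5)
Your proof is correct and follows essentially the paper's argument. You show there are finitely many monomial values at most $R$ (bounding the total exponent via the smallest generator exceeding $1$, where the paper bounds each exponent by $\log R/\log s_i$ after discarding $1$ from $S$), deduce that each element is a sum of at most $\lfloor R\rfloor$ such values, and then apply Proposition~\ref{prop:weight} with $w(x)=x$ and $\delta=1$.
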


\begin{proof}
	Without loss of generality, assume $1 \notin S$ and $R \ge 1$. For a monomial $s_1^{e_1} \cdots s_n^{e_n} \le R$, each exponent satisfies $e_i \le \log R/\log s_i$, leaving finitely many monomial values at most $R$. Each is at least $1$, so a polynomial expression with value at most $R$ has at most $\lfloor R \rfloor$ terms. Hence $M_S \cap [0, R]$ is finite, and Proposition~\ref{prop:weight} with $w(x) = x$ and $\delta = 1$ gives FF.
\end{proof}

Gotti proved that every additive submonoid of $\rr_{\ge 0}$ generated by an increasing sequence is an FFM~\cite[Theorem~5.6]{fG19}. This also implies the FF conclusion of Theorem~\ref{thm:ge1}, since finiteness in bounded intervals lets us list the positive elements of $M_S$ in increasing order. Our proof establishes the stronger assertion of finiteness in every bounded interval. The same conclusion holds when a field embedding sends all generators into $[1, \infty)$.

\begin{corollary} \label{cor:embedding}
	If there is a field embedding $\sigma \colon \qq(S) \to \rr$ such that $\sigma(s) \ge 1$ for every $s \in S$ then $M_S$ is an FFM.
\end{corollary}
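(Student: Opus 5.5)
The plan is to transport the problem along $\sigma$ and then invoke Theorem~\ref{thm:ge1}. Write $S = \{s_1, \ldots, s_n\}$ and set $\sigma(S) := \{\sigma(s_1), \ldots, \sigma(s_n)\}$, which is a finite subset of $[1, \infty)$ by hypothesis. I will show that $M_S \cong M_{\sigma(S)}$ as monoids. The FF property is defined purely in terms of the monoid structure (atoms, factorizations and their counts), so it is invariant under isomorphism, and Theorem~\ref{thm:ge1} then shows that $M_S$ is an FFM.

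For the isomorphism, note that $M_S \subseteq \nn_0[S] \subseteq \qq(S)$, so $\sigma$ is defined on $M_S$. Restrict $\sigma$ to $M_S$. Because $\sigma$ is a ring homomorphism fixing $\qq$ (indeed it fixes $\nn_0$), for every $f \in \nn_0[x_1, \ldots, x_n]$ we have
\[
\sigma(f(s_1, \ldots, s_n)) = f(\sigma(s_1), \ldots, \sigma(s_n)).
\]
Therefore $\sigma(M_S) = M_{\sigma(S)}$, and the restriction is an additive homomorphism onto $M_{\sigma(S)}$. A field embedding is injective, so the restriction is a bijection, and hence a monoid isomorphism $M_S \cong M_{\sigma(S)}$.

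The only point that needs care is that $S$ may contain transcendental elements, or elements that are not real. Neither causes a problem: the argument uses only that $\sigma$ is an injective ring homomorphism on $\qq(S)$, so no algebraicity assumption is needed. Duplicates are also harmless. If $\sigma(s_i) = \sigma(s_j)$, then $s_i = s_j$ by injectivity, so $\sigma(S)$ is still a finite set, and Theorem~\ref{thm:ge1} applies to it directly. I expect no step to be a genuine obstacle; the whole argument amounts to checking that evaluation commutes with $\sigma$.
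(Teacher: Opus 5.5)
Your proposal is correct and follows the paper's own proof: restrict $\sigma$ to get $M_S \cong M_{\sigma(S)}$ with $\sigma(S) \subseteq [1,\infty)$, then apply Theorem~\ref{thm:ge1}. You additionally spell out why the restriction is a bijective additive map and why transcendental elements and duplicates cause no trouble; the paper leaves these checks implicit.
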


\begin{proof}
	The restriction of $\sigma$ gives $M_S \cong M_{\sigma(S)}$, so Theorem~\ref{thm:ge1} applies.
\end{proof}

We now pivot to the ACCP, whose failure of course requires sequences of elements that converge to $0$. We demonstrate that certain decompositions yield these sequences and, hence, an infinite ascending chain of principal ideals.

\begin{prop} \label{prop:chain}
	Let $S$ be a finite subset of $\rr_{>0}$. If there are $t, c, d \in M_S \setminus \{0\}$ such that $c = ct + d$ then $M_S$ does not satisfy the ACCP.
\end{prop}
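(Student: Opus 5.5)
We build an explicit strictly ascending chain of principal ideals out of the relation $c = ct + d$. Since $M_S \subseteq \rr_{\ge 0}$ and $c, d > 0$, the relation forces $ct < c$, hence $0 < t < 1$. Multiplying the relation by $t^k$ is allowed because $M_S$ is closed under multiplication, and gives
\[
	ct^k = ct^{k+1} + dt^k \qquad (k \in \nn_0),
\]
where $dt^k \in M_S \setminus \{0\}$. So we set $x_k := ct^k$. Each $x_{k+1}$ divides $x_k$ in $M_S$, which gives $x_0 + M_S \subseteq x_1 + M_S \subseteq x_2 + M_S \subseteq \cdots$.

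It remains to check that the inclusions are strict. Suppose $x_k + M_S = x_{k+1} + M_S$. Then $x_k \mid_{M_S} x_{k+1}$, so $x_{k+1} - x_k \in M_S \subseteq \rr_{\ge 0}$. But $x_{k+1} - x_k = -dt^k < 0$, which is a contradiction. Equivalently, $M_S$ is reduced, so $x_k$ and $x_{k+1}$ would have to be equal, whereas their difference $dt^k$ is nonzero. Hence the chain never stabilizes, and $M_S$ does not satisfy the ACCP.

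There is no real obstacle here. The only point needing care is to use positivity in two places: once to ensure $dt^k \neq 0$, which needs $t \neq 0$ and $d \neq 0$, and once to rule out divisibility in the reverse direction. Both follow immediately from $S \subseteq \rr_{>0}$. The argument does not actually need $t<1$, but that fact matches the earlier remark that such chains arise from sequences converging to $0$.
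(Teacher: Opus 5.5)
Your proposal is correct and follows essentially the same route as the paper: set $x_k := ct^k$, use $x_k = x_{k+1} + dt^k$ with $dt^k \in M_S \setminus \{0\}$, and get strictness from $x_{k+1} - x_k < 0 \notin M_S$. Your side remark that $0 < t < 1$ is not needed for the strictness step is also accurate.
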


\begin{proof}
	Positivity and $c = ct + d$ give $0 < t < 1$. Set $x_k := ct^k$ for $k \in \nn_0$. Then $x_k = x_{k + 1} + dt^k$ with $dt^k \in M_S \setminus \{0\}$, yielding
	\[
		x_0 + M_S \subsetneq x_1 + M_S \subsetneq x_2 + M_S \subsetneq \cdots.
	\]
	The inclusions are strict because $x_{k + 1} - x_k < 0$ and hence $x_{k + 1} - x_k \notin M_S$.
\end{proof}

Every rational element of $M_S$ strictly between $0$ and $1$ produces a relation of this form.

\begin{corollary} \label{cor:rational-below-one}
	Let $S$ be a finite subset of $\rr_{>0}$. If $M_S$ contains a rational number in $(0, 1)$ then $M_S$ does not satisfy the ACCP, and so $M_S$ is neither a BFM nor an FFM.
\end{corollary}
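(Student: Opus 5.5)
Suppose $q \in M_S$ is rational with $0 < q < 1$. The plan is to build a relation $c = ct + d$ and invoke Proposition~\ref{prop:chain}.

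Write $q = m/n$ with $m, n \in \nn$ coprime and $m < n$. Since $q \in M_S$ and $M_S$ is closed under multiplication by elements of $\nn_0[S]$, it contains $t := q$. The natural choice is to take $c := 1$ and $d := 1 - q$. However, $1 - q$ need not lie in $M_S$, so we instead aim for the relation
\[
	m \cdot 1 = m \cdot q + (m - mq), \qquad m - mq = m(n - m)/n.
\]
Here the last summand is positive but is still not obviously in $M_S$. The cleaner route is to look for a unit fraction in $M_S$ and use the decomposition $1 = q + (1 - q)$ with $1 - q$ written as a nonnegative integer multiple of that unit fraction.

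Concretely, the coprimality of $m$ and $n$ gives integers with $um \equiv 1 \pmod n$. Taking $u \in \nn$, we get $uq = k + 1/n$ for some $k \in \nn_0$. This does not directly put $1/n$ into $M_S$, since subtraction is not available.

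To avoid subtraction, we instead choose $c := n$ (or any large integer) and use only nonnegative combinations. Then $cq = m$, and setting $t := q$ and $d := c - cq = n - m \in \nn$, we have $c = ct + d$ with $c = n$, $t = q$, and $d = n - m$. All three are nonzero elements of $M_S$, since $n, n - m \in \nn \subseteq M_S$ and $q \in M_S$. This is exactly the hypothesis of Proposition~\ref{prop:chain}, so $M_S$ fails the ACCP.

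Finally, Figure~\ref{fig:diagram} gives FF $\Rightarrow$ BF $\Rightarrow$ ACCP, so $M_S$ is neither a BFM nor an FFM. The only delicate point is choosing $c$ so that $c - ct$ is visibly in $M_S$; clearing the denominator of $q$ makes both $ct$ and $c - ct$ positive integers, which resolves it.
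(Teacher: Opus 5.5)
Your final argument is correct and follows essentially the same route as the paper: both apply Proposition~\ref{prop:chain} with $t = q$ after clearing the denominator of $q$. You take $c = n$ and $d = n - m \in \nn$, while the paper takes $c = m$ and $d = (n - m)q$; both choices give $ct + d = c$ with $c$ and $d$ nonzero elements of $M_S$. The exploratory detours through $1 - q$ and B\'ezout are unnecessary and could simply be deleted.
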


\begin{proof}
	Write $t = a/b \in M_S$ with $a, b \in \nn$ and $a < b$. Then $c := a$ and $d := (b - a)t$ are nonzero elements of $M_S$ satisfying $ct + d = bt = a = c$, so Proposition~\ref{prop:chain} applies.
\end{proof}

The weight and chain criteria give a complete characterization of the ACCP, BF, and FF properties for positive rational generators.

\begin{theorem} \label{thm:rational-middle}
	Let $S$ be a finite subset of $\qq_{>0}$. Then the following conditions are equivalent.
	\begin{enumerate}[(a)]
		\item $M_S$ is an FFM.
		\smallskip
		
		\item $M_S$ is a BFM.
		\smallskip
		
		\item $M_S$ satisfies the ACCP.
		\smallskip
		
		\item $S \subseteq [1, \infty)$.
	\end{enumerate}
\end{theorem}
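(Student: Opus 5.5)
The plan is to prove the cycle (d)$\Rightarrow$(a)$\Rightarrow$(b)$\Rightarrow$(c)$\Rightarrow$(d).

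\emph{The forward implications.} The implication (d)$\Rightarrow$(a) is exactly Theorem~\ref{thm:ge1}, since $S \subseteq [1,\infty)$ makes $M_S \cap [0,R]$ finite and hence $M_S$ an FFM. The implications (a)$\Rightarrow$(b)$\Rightarrow$(c) are the general implications FF $\Rightarrow$ BF $\Rightarrow$ ACCP from Figure~\ref{fig:diagram}, justified in Section~\ref{sec:background}.

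\emph{The contrapositive of (c)$\Rightarrow$(d).} It remains to show that if some $s \in S$ satisfies $s < 1$, then $M_S$ fails the ACCP. Since $s$ is a positive rational in $(0,1)$ and $s \in M_S$, Corollary~\ref{cor:rational-below-one} applies directly with $t = s$. It produces the nonstabilizing chain $s^0\cdot a + M_S \subsetneq \cdots$, so $M_S$ does not satisfy the ACCP. (If $S$ is empty, or $S = \{1\}$, then $M_S = \nn_0$ and (d) holds vacuously, so nothing extra is needed.)

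\emph{Where the difficulty lies.} No step is a genuine obstacle, because all the work sits in Theorem~\ref{thm:ge1} and Corollary~\ref{cor:rational-below-one}. The only point to check is that the corollary's hypothesis really is met: rationality of the generators means any generator below $1$ is itself a rational element of $M_S$ lying in $(0,1)$. This is exactly where the hypothesis $S \subseteq \qq_{>0}$ is used; for irrational generators a generator below $1$ need not yield such a rational element.
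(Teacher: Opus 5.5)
Your proposal is correct and matches the paper's proof: (a)$\Rightarrow$(b)$\Rightarrow$(c) from the general implications in Figure~\ref{fig:diagram}, (d)$\Rightarrow$(a) from Theorem~\ref{thm:ge1}, and a failure of (d) gives a generator that is a rational element of $M_S$ in $(0,1)$, to which Corollary~\ref{cor:rational-below-one} applies. Your remark that rationality of the generators is exactly what makes that corollary applicable is also accurate.
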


\begin{proof}
	The implications (a)~$\Rightarrow$~(b)~$\Rightarrow$~(c) follow from Figure~\ref{fig:diagram}, and (d)~$\Rightarrow$~(a) follows from Theorem~\ref{thm:ge1}. If~(d) fails then $S$ contains a rational number in $(0, 1)$, so Corollary~\ref{cor:rational-below-one} rules out~(c).
\end{proof}

The ACCP is strictly stronger than atomicity, even for one rational generator. The following case also exhibits unbounded factorization lengths; it is a standard example from both~\cite[Corollary~4.4]{CGG21} and~\cite[Corollary~4.8]{CG22}.

\begin{example} \label{ex:atomic-notaccp}
	For $q := 2/3$, Lemma~\ref{lem:rational} gives atomicity, while $x_k := 2q^k$ gives the chain from Corollary~\ref{cor:rational-below-one}. Starting with $2 = 3q$ and iterating $3q^k = q^k + 2q^k = q^k + 3q^{k + 1}$ yields, for each $k \in \nn$, the factorization
	\[
		2 = q + q^2 + \cdots + q^{k - 1} + 3q^k,
	\]
	where the initial sum is empty for $k = 1$. The length is $k + 2$, so the lengths of factorizations of $2$ are unbounded.
	\exampleqed
\end{example}

\bigskip
\section{Factoriality and Half-Factoriality}
\label{sec:uf-hf}

We characterize the UF property through linear independence of atoms and half-factoriality through length homomorphisms.

\medskip
\subsection{Atomic Bases}

For reduced atomic submonoids of $\cc$, unique factorization is equivalent to linear independence of the atoms.

\begin{prop} \label{prop:uf-basis}
	A reduced atomic submonoid $M$ of $(\cc, +)$ is a UFM if and only if its atoms are linearly independent over $\qq$.
\end{prop}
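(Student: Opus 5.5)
The plan is to prove the two directions separately, in each case comparing $\qq$-linear relations among atoms with pairs of distinct factorizations. Since $M$ is reduced, $\fac(M)$ is the free commutative monoid on $\A(M)$ itself, and $\pi_M$ sends $\sum_a n_a a$ to the complex number $\sum_a n_a a$.

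\emph{Linear independence implies UF.} Let $M$ be atomic and suppose $z = \sum_a n_a a$ and $z' = \sum_a n'_a a$ are two factorizations of the same $x \in M$.
\begin{enumerate}[1.]
	\item Subtracting the two expressions in $\cc$ gives $\sum_a (n_a - n'_a)a = 0$. This is a finite integer, hence rational, linear relation among atoms.
	\item Linear independence forces $n_a = n'_a$ for every atom $a$, so $z = z'$.
	\item Atomicity gives existence of a factorization, so $\fac_M(x)$ is a singleton.
\end{enumerate}

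\emph{UF implies linear independence.} Suppose $\sum_{a \in F} q_a a = 0$ for a finite set $F \subseteq \A(M)$ with rationals $q_a$ not all zero.
\begin{enumerate}[1.]
	\item Clear denominators to obtain integers $m_a$, not all zero.
	\item Split into positive and negative parts: set $P := \{a : m_a > 0\}$ and $N := \{a : m_a < 0\}$. Then
	\[
	\sum_{a \in P} m_a a = \sum_{a \in N} (-m_a) a.
	\]
	\item Both sides are elements of $M$, and they give two formal sums in $\fac(M)$ with the same value.
	\item These formal sums are distinct because their supports $P$ and $N$ are disjoint and not both empty. Hence some element has two factorizations, contradicting UF.
\end{enumerate}

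The main obstacle is the edge case in the second direction where one side is empty. Say $N = \varnothing$, so $\sum_{a \in P} m_a a = 0$ with $P$ nonempty. Then the empty factorization and $\sum_{a \in P} m_a a$ are two distinct factorizations of $0$, which again contradicts UF. This still needs to be squared with the convention that the unit $0$ has only the empty factorization. Alternatively, the case can be excluded directly: $\sum_{a \in P} m_a a = 0$ with all $m_a \ge 1$ means $-a \in M$ for any $a \in P$, so $a$ is a unit in the reduced monoid $M$. That forces $a = 0$, which is impossible since atoms are nonunits. I would use this reducedness argument, so that both $P$ and $N$ are nonempty and the supports genuinely differ.
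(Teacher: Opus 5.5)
Your proof is correct and follows the paper's argument: subtracting two factorizations gives one direction, and clearing denominators and splitting by sign into two sums of atoms with disjoint supports gives the other. Your separate handling of the case where one side is empty (a sum with positive coefficients equal to $0$ would give $-a \in M$, making the atom $a$ a unit) makes explicit a point the paper leaves implicit.
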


\begin{proof}
	Linear independence of the atoms over $\qq$ forces two factorizations of the same element to coincide. Conversely, a nontrivial rational relation among atoms gives, after clearing denominators and separating terms by sign, an equality of two sums of atoms with disjoint supports. These represent distinct factorizations, contradicting UF.
\end{proof}

Algebraic independence of the semiring generators, which leads to linear independence of the atoms, is sufficient for UF but not necessary: if $t$ is transcendental then $M_{\{t, t^2\}} = M_{\{t\}}$ is a UFM. Similarly, $M_{\{\sqrt{2}\}} = \nn_0 + \nn_0\sqrt{2}$ is a UFM with atoms $1$ and $\sqrt{2}$.
\smallskip

For positive algebraic generators, joint linear disjointness is what we need for linear independence, as with Proposition~\ref{prop:kernel}.

\begin{theorem} \label{thm:uf-ascent}
	Let $\alpha_1, \ldots, \alpha_d$ be positive algebraic numbers such that the fields $\qq(\alpha_1), \ldots, \qq(\alpha_d)$ are jointly linearly disjoint. If $M_{\{\alpha_i\}}$ is a UFM for every $i \in \{1, \ldots, d\}$ then $M_{\{\alpha_1, \ldots, \alpha_d\}}$ is a UFM.
\end{theorem}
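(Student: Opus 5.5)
The plan is to exhibit an explicit finite $\qq$-linearly independent set of monomial values that generates $M_S$ additively, where $S = \{\alpha_1, \ldots, \alpha_d\}$. Then $M_S$ is free on that set, and Proposition~\ref{prop:uf-basis} (or freeness directly) gives UF. Let $e_i := [\qq(\alpha_i) : \qq]$ and $A_i := \A(M_{\{\alpha_i\}})$.

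\emph{Step 1: each $A_i$ is a basis of $\qq(\alpha_i)$.} Since $\alpha_i > 0$, the monoid $M_{\{\alpha_i\}}$ is reduced. It is atomic because it is a UFM, so by Lemma~\ref{lem:atoms} the set $A_i$ consists of powers of $\alpha_i$ and generates $M_{\{\alpha_i\}}$. By Proposition~\ref{prop:uf-basis}, $A_i$ is $\qq$-linearly independent. Its $\qq$-span contains the $\qq$-span of $M_{\{\alpha_i\}}$, which is $\qq[\alpha_i] = \qq(\alpha_i)$. Hence $A_i$ is a $\qq$-basis of $\qq(\alpha_i)$; in particular it is finite with $|A_i| = e_i$.

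\emph{Step 2: the product set is a basis of $\qq(S)$.} Set
\[
	P := \{a_1 a_2 \cdots a_d : a_i \in A_i\}.
\]
Every element of $\qq(S) = \qq[\alpha_1, \ldots, \alpha_d]$ is a $\qq$-combination of monomials $\alpha_1^{k_1} \cdots \alpha_d^{k_d}$. Expanding each $\alpha_i^{k_i}$ in the basis $A_i$ shows that $P$ spans $\qq(S)$. The family $P$, indexed by tuples, has at most $\prod_i e_i$ elements, and this equals $[\qq(S) : \qq]$ by joint linear disjointness. A spanning family of size at most the dimension must consist of distinct elements and be a basis. This is the same dimension count as in Proposition~\ref{prop:kernel}, and it is the step where the hypothesis is essential. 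I expect it to be the only real obstacle: one must check carefully that distinct tuples give distinct values and that no collapse occurs, which the counting argument settles.

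\emph{Step 3: $M_S$ is free on $P$.} Each monomial value $\alpha_1^{k_1} \cdots \alpha_d^{k_d}$ is a product of elements $\alpha_i^{k_i} \in M_{\{\alpha_i\}}$, and each $\alpha_i^{k_i}$ is an $\nn_0$-combination of $A_i$. Multiplying out writes the monomial as an $\nn_0$-combination of $P$. Since $\mon_S$ generates $M_S$ additively and $P \subseteq M_S$, we get $M_S = \langle P \rangle$. Because $P$ is $\qq$-linearly independent, every element of $M_S$ has a unique expression as an $\nn_0$-combination of $P$, so $M_S$ is isomorphic to the free commutative monoid on $P$.

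\emph{Conclusion.} From freeness, $M_S$ is reduced (as is also clear from positivity) and $\A(M_S) = P$. The atoms are therefore linearly independent, and $M_S$ is atomic since it is generated by its atoms. Proposition~\ref{prop:uf-basis} then shows that $M_S$ is a UFM.
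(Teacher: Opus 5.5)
Your proposal is correct and follows essentially the same route as the paper: each atom set $\A(M_{\{\alpha_i\}})$ is a $\qq$-basis of $\qq(\alpha_i)$ by Proposition~\ref{prop:uf-basis} plus atomicity, the product set generates $M_S$ and spans $\qq(S)$, and the degree count from joint linear disjointness makes $M_S$ free on that set. You additionally spell out two points the paper leaves implicit: the reducedness needed to apply Proposition~\ref{prop:uf-basis}, and why distinct index tuples give distinct products.
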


\begin{proof}
	For each $i \in \{1, \ldots, d\}$, set $B_i := \A(M_{\{\alpha_i\}})$. Proposition~\ref{prop:uf-basis} makes $B_i$ linearly independent over $\qq$. Atomicity shows that $B_i$ spans every power of $\alpha_i$, hence $\qq(\alpha_i)$. Thus, $B_i$ is a basis for $\qq(\alpha_i)$ and has cardinality $[\qq(\alpha_i) : \qq]$.
	
	Set $B := \{b_1 \cdots b_d : b_i \in B_i \text{ for every } i\}$. Expanding the atomic decompositions of the powers $\alpha_i^{e_i}$ shows that $B$ generates $M_{\{\alpha_1, \ldots, \alpha_d\}}$ additively and spans $\qq(\alpha_1, \ldots, \alpha_d)$. Joint linear disjointness gives $[\qq(\alpha_1, \ldots, \alpha_d) : \qq] = \prod_{i = 1}^d |B_i| \ge |B|$. Therefore $M_{\{\alpha_1, \ldots, \alpha_d\}}$ is free on $B$ and is a UFM.
\end{proof}

Note that for a single positive algebraic generator, HF and UF are equivalent and hold precisely when its minimal polynomial has the form $X^d - \sum_{j = 0}^{d-1} c_j X^j$ with $c_j \in \nn_0$ for every $j$~\cite[Theorem~5.4(2)]{CG22}.

\medskip
\subsection{Length Homomorphisms}

For half-factoriality, relations among atoms are allowed, but they must preserve the total number of summands. We can express this requirement with length homomorphisms.

\begin{prop} \label{prop:hf-length}
	A reduced atomic monoid $M$ is an HFM if and only if there is a monoid homomorphism $\ell \colon M \to \nn_0$ such that $\ell(a) = 1$ for every $a \in \A(M)$.
\end{prop}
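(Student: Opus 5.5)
The plan is to prove the two directions separately; both are short. The easy direction assumes a homomorphism $\ell \colon M \to \nn_0$ with $\ell(a) = 1$ for every $a \in \A(M)$. Take $x \in M$ and a factorization $z = a_1 + \cdots + a_k$ of $x$, with $a_i \in \A(M)$. Since $M$ is reduced, associates coincide. Additivity of $\ell$ then gives $\ell(x) = \sum_i \ell(a_i) = k = |z|$. So every factorization of $x$ has length $\ell(x)$, which means $\len_M(x) = \{\ell(x)\}$. Since $M$ is atomic by hypothesis, $M$ is an HFM.

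For the converse, suppose $M$ is an HFM. Define $\ell(x)$ to be the unique element of $\len_M(x)$. This set is nonempty by atomicity and a singleton by half-factoriality, so $\ell$ is well defined. The checks are as follows.
\begin{enumerate}[(i)]
	\item $\ell(0) = 0$, because $0$ has only the empty factorization, as noted in Section~\ref{sec:background}.
	\item $\ell(a) = 1$ for every atom $a$, because $a$ itself is a length-$1$ factorization of $a$.
	\item $\ell$ is additive. Given $x, y \in M$, choose factorizations $z_x$ and $z_y$. Their formal sum $z_x + z_y$ lies in $\fac_M(x + y)$, since $\pi_M$ is a homomorphism. Its length is $|z_x| + |z_y|$, so uniqueness of lengths gives $\ell(x + y) = \ell(x) + \ell(y)$.
\end{enumerate}

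I do not expect a genuine obstacle. The only point needing care is the bookkeeping with $M_{\red}$ versus $M$. Since $M$ is assumed reduced, $\pi_M$ maps $\fac(M)$ directly onto $M$, and concatenating factorizations is compatible with $\pi_M$. One should state this explicitly when showing additivity in~(iii).
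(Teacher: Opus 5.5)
Your proof is correct and takes the same approach as the paper. In one direction the common factorization length is the homomorphism, and in the other additivity of $\ell$ forces every factorization of $x$ to have length $\ell(x)$. You also spell out the well-definedness and additivity checks (concatenating factorizations, using that $\pi_M$ is a homomorphism), which the paper leaves implicit.
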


\begin{proof}
	If $M$ is an HFM then the map sending each element to the common length of its factorizations is such a homomorphism. Conversely, if $\ell$ exists then additivity forces every factorization of $x \in M$ to have length $\ell(x)$.
\end{proof}

When every monomial value is an atom, augmentation is the natural candidate for a length homomorphism. Its compatibility with polynomial relations gives the following criterion.

\Needspace{11\baselineskip}
\begin{theorem} \label{thm:augmentation}
	Let $S = \{s_1, \ldots, s_n\}$ be a finite subset of $\cc$, and let $I_S$ be the kernel of the evaluation map $\zz[x_1, \ldots, x_n] \to \zz[S]$. Then the following conditions are equivalent.
	\begin{enumerate}[(a)]
		\item Every $h \in I_S$ satisfies $h(1, \ldots, 1) = 0$.
		\smallskip
		
		\item The assignment $\ell(f(S)) := f(1, \ldots, 1)$ for $f \in \nn_0[x_1, \ldots, x_n]$ is a well-defined map $\ell \colon M_S \to \nn_0$.
		\smallskip
		
		\item $M_S$ is a reduced HFM, and every monomial value $s_1^{e_1} \cdots s_n^{e_n}$ is an atom of $M_S$.
	\end{enumerate}
\end{theorem}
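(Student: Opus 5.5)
I would prove the cycle (a)~$\Rightarrow$~(b)~$\Rightarrow$~(c)~$\Rightarrow$~(a).

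For (a)~$\Rightarrow$~(b), take $f, g \in \nn_0[x_1, \ldots, x_n]$ with $f(S) = g(S)$. Then $h := f - g \in I_S$, so (a) gives $f(1, \ldots, 1) = g(1, \ldots, 1)$. Hence $\ell$ is well defined, and it takes values in $\nn_0$ because the coefficients of $f$ are nonnegative. Additivity of augmentation also makes $\ell$ a monoid homomorphism.

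For (b)~$\Rightarrow$~(c), I would use $\ell$ to get each part of (c) in turn.
\begin{enumerate}[(i)]
	\item \emph{Reducedness.} Suppose $x + y = 0$ in $M_S$. Then $\ell(x) + \ell(y) = \ell(0) = 0$, so $\ell(x) = 0$. If $x = f(S)$, then $f(1, \ldots, 1) = 0$ forces $f = 0$, hence $x = 0$.
	\item \emph{Monomials are atoms.} By (i), every nonzero element has $\ell \ge 1$. Each monomial value has $\ell = 1$, so any decomposition of it has a summand with $\ell = 0$, which is $0$. Monomials are nonunits since the monoid is reduced.
	\item \emph{Atomicity.} Every element is a sum of monomial values, which are now atoms.
	\item \emph{Half-factoriality.} By Lemma~\ref{lem:atoms}(1), the proof for positive reals carries over verbatim to $\cc$: a polynomial expression with at least two terms splits into two nonzero summands, using reducedness and $\ell \ge 1$ on nonzero elements. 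So every atom is a monomial value and $\ell$ equals $1$ on atoms. Proposition~\ref{prop:hf-length} then gives HF.
\end{enumerate}

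For (c)~$\Rightarrow$~(a), Proposition~\ref{prop:hf-length} supplies a length homomorphism $L \colon M_S \to \nn_0$ equal to $1$ on atoms, hence on every monomial value. Thus $L(f(S)) = f(1, \ldots, 1)$ for every $f$ with coefficients in $\nn_0$. Given $h \in I_S$, split it as $h = f - g$ with $f, g$ having coefficients in $\nn_0$. Then $f(S) = g(S)$, so $f(1, \ldots, 1) = L(f(S)) = L(g(S)) = g(1, \ldots, 1)$, that is, $h(1, \ldots, 1) = 0$.

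The main obstacle is the bookkeeping in (b)~$\Rightarrow$~(c). Because $S \subseteq \cc$ is not assumed positive, reducedness and the identification of atoms with monomials cannot come from positivity. They must be extracted from $\ell$ itself, in the order: reducedness first, then that nonzero elements have $\ell \ge 1$, then the atom description.
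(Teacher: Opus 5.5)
Your proposal is correct and follows essentially the same route as the paper. You split integer polynomials into nonnegative parts for (a)$\Rightarrow$(b), derive reducedness, atomhood of monomials, atomicity, and HF (via Proposition~\ref{prop:hf-length}) from $\ell$ for (b)$\Rightarrow$(c), and compare the lengths $f(1,\ldots,1)$ and $g(1,\ldots,1)$ of two monomial factorizations for (c)$\Rightarrow$(a). Your explicit argument that every atom is a monomial value also correctly uses both reducedness and positivity of $\ell$ on nonzero elements, a step the paper only states tersely as ``the converse follows from reducedness.''
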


\begin{proof}
	The equivalence of~(a) and~(b) follows from writing each polynomial with integer coefficients as the difference of two polynomials with nonnegative integer coefficients.
	
	Assume~(b). If $f(S) = 0$ with $f \in \nn_0[x_1, \ldots, x_n]$ then $f(1, \ldots, 1) = \ell(0) = 0$, so $f = 0$. Thus, the additive map $\ell$ is positive on every nonzero element, and $M_S$ is reduced. Every monomial value has $\ell$-value $1$, so it is an atom, and the converse follows from reducedness. Since they generate $M_S$, it is atomic and an HFM by Proposition~\ref{prop:hf-length}.
	
	Assume~(c), and write $h \in I_S$ as $h = f - g$ with $f, g \in \nn_0[x_1, \ldots, x_n]$. The expressions $f(S)$ and $g(S)$ factor the same element into monomial atoms. Half-factoriality equates their lengths $f(1, \ldots, 1)$ and $g(1, \ldots, 1)$, so $h(1, \ldots, 1) = 0$.
\end{proof}

The criterion requires every monomial value to be an atom for well-definedness. For example, $M_{\{2\}} = \nn_0$ is an HFM, but its monomial value $2$ is not an atom. We now use augmentation to construct an HFM that does not have unique factorization.

\begin{example} \label{ex:hf}
	Let $t > 1$ be transcendental, and set $u := t^2 - t + 1$. If $h(t, u) = 0$ for $h \in \zz[x_1, x_2]$ then transcendence gives $h(X, X^2 - X + 1) = 0$ in $\zz[X]$. Evaluating at $X = 1$ yields $h(1, 1) = 0$. By Theorem~\ref{thm:augmentation}, the monoid $M_{\{t, u\}}$ is an HFM and every monomial value is an atom.
	
	Since $1 < t < u < t^2$, the equality
	\[
		t + u = t^2 + 1
	\]
	gives two distinct factorizations of length $2$, so $M_{\{t, u\}}$ is not a UFM. Both generators exceed $1$, so it is an FFM by Theorem~\ref{thm:ge1}. \exampleqed
\end{example}

The next example shows that HF need not imply UF even for positive algebraic generators. We use a different length homomorphism.

\begin{example} \label{ex:quadratic-hf}
	Set $a := 1 + \sqrt{2}$ and $b := 1 + 2\sqrt{2}$. The additive monoid $M_0 := \langle 1, a, b \rangle$ is closed under multiplication because
	\[
		a^2 = 2 + b, \qquad ab = 3 + a + b, \quad \text{and} \quad b^2 = 7 + 2b.
	\]
	Hence $M_{\{a, b\}} = M_0$. For $r, s \in \qq$, the rational-coordinate map $r + s\sqrt{2} \mapsto r$ restricts to a homomorphism $\ell \colon M_0 \to \nn_0$. Each expression as a sum of $1$, $a$, and $b$ has $\ell$-value equal to its number of summands. Thus, $\ell$ is positive on nonzero elements, and these three distinct generators, each of $\ell$-value $1$, are precisely the atoms. Proposition~\ref{prop:hf-length} gives HF, whereas
	\[
		1 + b = a + a
	\]
	gives two distinct factorizations, so $M_0$ is not a UFM. Since $a, b > 1$, it is an FFM by Theorem~\ref{thm:ge1}. \exampleqed
\end{example}

The next example separates finite factorization from half-factoriality with only one rational generator.

\begin{example} \label{ex:ff-not-hf}
	The monoid $M_{\{3/2\}}$ is an FFM by Theorem~\ref{thm:ge1}, and $1$ and $3/2$ are atoms by Lemma~\ref{lem:rational}. The equality
	\[
		1 + 1 + 1 = \frac{3}{2} + \frac{3}{2}
	\]
	shows that $M_{\{3/2\}}$ is not an HFM. 
	\exampleqed
\end{example}

For positive rational generators, half-factoriality forces the additive monoid to be $\nn_0$. This gives the following characterization of unique- and half-factoriality, mirroring~\cite[Proposition~4.22]{CGG21} for the one-generator case.

\begin{theorem} \label{thm:rational-top}
	Let $S$ be a finite subset of $\qq_{>0}$. Then the following conditions are equivalent.
	\begin{enumerate}[(a)]
		\item $M_S$ is a UFM.
		\smallskip
		
		\item $M_S$ is an HFM.
		\smallskip
		
		\item $M_S = \nn_0$.
		\smallskip
		
		\item $S \subseteq \nn$.
	\end{enumerate}
\end{theorem}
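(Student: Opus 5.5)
The plan is to prove the cycle (d)~$\Rightarrow$~(c)~$\Rightarrow$~(a)~$\Rightarrow$~(b)~$\Rightarrow$~(d). The first three implications are quick. If $S \subseteq \nn$, then every monomial value is a positive integer and $1 \in M_S$, so $M_S = \nn_0$. The monoid $\nn_0$ is free on the single atom $1$, hence a UFM. Finally, UF implies HF by Figure~\ref{fig:diagram}.

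The substance is (b)~$\Rightarrow$~(d). Assume $M_S$ is an HFM. By Theorem~\ref{thm:fixed-length}(3) it is then an FFM, so Theorem~\ref{thm:rational-middle} gives $S \subseteq [1, \infty)$. Since $M_S$ is atomic and nonzero, Lemma~\ref{lem:atoms}(2) shows that $1$ is an atom. Now suppose, for contradiction, that some $q = a/b \in S$ is not an integer, so $b \ge 2$ and $a > b$ by coprimality and $q \ge 1$.

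I will mimic Example~\ref{ex:ff-not-hf}. The relation $b \cdot q = a \cdot 1$ is an identity in $M_S$. The right side is a factorization of length $a$, since $1$ is an atom. On the left, $q$ need not be an atom of $M_S$, since other generators might decompose it. However, every factorization of $q$ has some length $\ell(q) \ge 1$, and by Proposition~\ref{prop:hf-length} there is a length homomorphism $\ell$. This gives $b\,\ell(q) = a$, so $b \mid a$, contradicting $\gcd(a,b) = 1$ with $b \ge 2$. Hence every element of $S$ is an integer.

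The main point to check is that $\ell$ is a genuine homomorphism on all of $M_S$, not just on atoms; Proposition~\ref{prop:hf-length} supplies exactly this, using that $M_S$ is reduced, which holds for positive generators. This argument also makes the appeal to Theorem~\ref{thm:rational-middle} unnecessary, because the divisibility contradiction works for any noninteger $q$. The only real ingredient is that $1$ is an atom, which follows from atomicity via Lemma~\ref{lem:atoms}.
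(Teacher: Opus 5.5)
Your proof is correct and uses the same idea as the paper: once $1$ is known to be an atom, compare lengths in $b\cdot(a/b) = a\cdot 1$. The paper applies this to an atom $c/d$, where both sides are genuine factorizations and $c=d$ forces $\A(M_S)=\{1\}$; you apply it to a generator through the length homomorphism of Proposition~\ref{prop:hf-length}, which is equally valid, and, as you note, the detour through Theorem~\ref{thm:fixed-length}(3) and Theorem~\ref{thm:rational-middle} is unnecessary and can be dropped.
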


\begin{proof}
	The implication (a)~$\Rightarrow$~(b) follows from Figure~\ref{fig:diagram}. For (b)~$\Rightarrow$~(c), suppose that $M_S$ is an HFM. Since $M_S$ is atomic and not a group, Lemma~\ref{lem:atoms} makes $1$ an atom. For an atom $c/d$ with coprime $c, d \in \nn$, the equality $d \cdot (c/d) = c \cdot 1$ gives lengths $d$ and $c$. Hence $c = d$, so $1$ is the only atom and $M_S = \nn_0$. The inclusion $S \subseteq M_S$ gives (c)~$\Rightarrow$~(d). Finally, (d) gives $M_S = \nn_0$, which is a UFM, proving (a).
\end{proof}

Combining Theorems~\ref{thm:rational-middle} and~\ref{thm:rational-top}, we obtain the following implications for every finite subset $S$ of $\qq_{>0}$:

\begin{figure}[ht]
	\begin{tikzcd}
		\textbf{ UF } \ \arrow[r, Leftrightarrow] & \ \textbf{ HF } \arrow[r, Rightarrow] \arrow[red, r, Leftarrow, "/"{anchor=center,sloped}, shift left=1.7ex] & \ \textbf{ FF } \ \arrow[r, Leftrightarrow]	& \ \textbf{ BF } \arrow[r, Leftrightarrow] & \textbf{ ACCP } \arrow[r, Rightarrow] \arrow[red, r, Leftarrow, "/"{anchor=center,sloped}, shift left=1.7ex]  & \textbf{ Atomic }
	\end{tikzcd}
	\caption{The diagram shows implications between the six properties in Figure~\ref{fig:diagram} restricted to the class of additive monoids of finitely generated positive rational semirings. Red arrows mark implications that are not reversible, shown by Examples~\ref{ex:ff-not-hf} and~\ref{ex:atomic-notaccp}.}
\end{figure}

\bigskip
\section{Laurent Evaluations}
\label{sec:laurent}

We compare our results with the Laurent evaluation monoids $L_\alpha = M_{\{\alpha, \alpha^{-1}\}}$. These were studied by Zhu~\cite{sZ22}, and we include the relevant statements and proofs for completeness.

\begin{prop}[{\cite[Theorem~3.1 and Proposition~3.4]{sZ22}}] \label{prop:laurent}
	Let $\alpha \in \rr_{>0}$. Then the following statements hold.
	\begin{enumerate}[(1)]
		\item Either $L_\alpha$ is antimatter, or $\A(L_\alpha) = \{\alpha^k : k \in \zz\}$ and $L_\alpha$ is atomic.
		\smallskip
		
		\item If $\alpha \ne 1$ then $L_\alpha$ is atomic if and only if $\A(M_{\{\alpha\}})$ is infinite.
	\end{enumerate}
\end{prop}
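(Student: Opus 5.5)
The plan is to use that $L_\alpha$ is a reduced positive monoid in which multiplication by any power of $\alpha$ is an automorphism. This reduces every atomicity question to whether $1$ is an atom.

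For~(1), note first that $\alpha, \alpha^{-1} \in L_\alpha$. Since $L_\alpha$ is closed under multiplication, $\alpha^j L_\alpha \subseteq L_\alpha$ for every $j \in \zz$, and hence $x \mapsto \alpha^j x$ is a monoid automorphism of $L_\alpha$ with inverse $x \mapsto \alpha^{-j}x$. By Lemma~\ref{lem:atoms}(1), applied with $S = \{\alpha, \alpha^{-1}\}$, every atom is a monomial value, that is, a power $\alpha^k$ with $k \in \zz$. The automorphism shows that $\alpha^k$ is an atom if and only if $1$ is.

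We then split into two cases.
\begin{itemize}
	\item If $1 \notin \A(L_\alpha)$, then Lemma~\ref{lem:atoms}(2) makes $L_\alpha$ antimatter.
	\item If $1 \in \A(L_\alpha)$, then every $\alpha^k$ is an atom, so $\A(L_\alpha) = \{\alpha^k : k \in \zz\}$. These monomial values generate $L_\alpha$ additively, so $L_\alpha$ is atomic.
\end{itemize}

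For~(2), by~(1) together with Lemma~\ref{lem:atoms}(2), $L_\alpha$ is atomic if and only if $1 \in \A(L_\alpha)$. It therefore suffices to show that $1 \notin \A(L_\alpha)$ if and only if $\A(M_{\{\alpha\}})$ is finite. The hypothesis $\alpha \ne 1$ is used here: the powers $\alpha^m$ with $m \in \nn_0$ are pairwise distinct. By Lemma~\ref{lem:atoms}(1), $\A(M_{\{\alpha\}})$ is a subset of these powers, so it is finite exactly when some $\alpha^N$ fails to be an atom.

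Both directions then follow.
\begin{itemize}
	\item Suppose $\alpha^N \notin \A(M_{\{\alpha\}})$. Then $\alpha^N = u + v$ with $u, v \in M_{\{\alpha\}} \setminus \{0\}$. Multiplying by $\alpha^{-N}$ gives a decomposition $1 = \alpha^{-N}u + \alpha^{-N}v$ into nonzero elements of $L_\alpha$. Since $L_\alpha$ is reduced, $1$ is not an atom.
	\item Conversely, suppose $1 = \sum_{k \in F} c_k \alpha^k$ is a decomposition into at least two monomial summands, counted with multiplicity. Choose $N \ge \max\{0, -\min F\}$. Multiplying by $\alpha^N$ gives a nontrivial decomposition of $\alpha^N$ inside $M_{\{\alpha\}}$. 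By Lemma~\ref{lem:atoms}(3), no $\alpha^m$ with $m \ge N$ is an atom of $M_{\{\alpha\}}$. Hence $\A(M_{\{\alpha\}}) \subseteq \{\alpha^0, \ldots, \alpha^{N-1}\}$ is finite.
\end{itemize}

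I do not expect a serious obstacle. The steps needing care are the following.
\begin{itemize}
	\item Checking that multiplication by $\alpha^{\pm1}$ really preserves $L_\alpha$, which holds because $L_\alpha$ is a semiring containing both $\alpha$ and $\alpha^{-1}$.
	\item Invoking $\alpha \ne 1$, so that finiteness of $\A(M_{\{\alpha\}})$ is equivalent to some power of $\alpha$ failing to be an atom. For $\alpha = 1$ we have $M_{\{1\}} = \nn_0$ with the single atom $1$, while $L_1 = \nn_0$ is atomic, so the hypothesis cannot be dropped.
\end{itemize}
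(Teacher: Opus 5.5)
Your proposal is correct and follows essentially the same route as the paper. Part~(1) goes through Lemma~\ref{lem:atoms} and the automorphism $x \mapsto \alpha^j x$. Part~(2) scales decompositions between $1 \in L_\alpha$ and $\alpha^N \in M_{\{\alpha\}}$, and combines this with Lemma~\ref{lem:atoms}(3) and the distinctness of the powers when $\alpha \ne 1$. You merely phrase the paper's two directions as contrapositives, showing that $1 \notin \A(L_\alpha)$ if and only if $\A(M_{\{\alpha\}})$ is finite.
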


\begin{proof}
	(1) Every atom of $L_\alpha$ is a Laurent monomial value, as in Lemma~\ref{lem:atoms}. Multiplication by $\alpha$ is an automorphism, so the existence of one atom makes every integer power of $\alpha$ an atom. These powers generate $L_\alpha$.
	\smallskip
	
	(2) If $L_\alpha$ is atomic then every nonnegative power of $\alpha$ remains an atom in $M_{\{\alpha\}}$ and these powers are distinct because $\alpha \ne 1$. Conversely, if $\A(M_{\{\alpha\}})$ is infinite then every nonnegative power of $\alpha$ is an atom: by Lemma~\ref{lem:atoms}(3), if one power were not an atom then no higher power would be an atom. A decomposition $1 = u + v$ with $u, v \in L_\alpha \setminus \{0\}$ would, after multiplication by $\alpha^N$ for sufficiently large $N \in \nn$, decompose $\alpha^N$ in $M_{\{\alpha\}}$. This is impossible, so $1$ is an atom of $L_\alpha$, and~(1) gives atomicity.
\end{proof}

The description of Laurent atoms also determines when half-factoriality and unique factorization hold.

\begin{prop}[{\cite[Proposition~5.1]{sZ22}}] \label{prop:laurent-uf}
	Let $\alpha \in \rr_{>0}$. Then the following conditions are equivalent.
	\begin{enumerate}[(a)]
		\item $L_\alpha$ is a UFM.
		\smallskip
		
		\item $L_\alpha$ is an HFM.
		\smallskip
		
		\item Either $\alpha = 1$ or $\alpha$ is transcendental.
	\end{enumerate}
\end{prop}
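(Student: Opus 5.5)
The equivalence of~(a) and~(b) is immediate from Figure~\ref{fig:diagram} in one direction, so I will prove (c)~$\Rightarrow$~(a) and (b)~$\Rightarrow$~(c). Note that $L_\alpha$ is reduced, since it is a nontrivial submonoid of $\rr_{\ge 0}$.

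For (c)~$\Rightarrow$~(a): if $\alpha = 1$ then $L_\alpha = \nn_0$, which is a UFM. If $\alpha$ is transcendental, I will show directly that $L_\alpha$ is free on $\{\alpha^k : k \in \zz\}$. Suppose $\sum_{k \in F} c_k \alpha^k = \sum_{k \in F} d_k \alpha^k$ with $c_k, d_k \in \nn_0$. Multiplying by $\alpha^N$ for large $N$ gives a polynomial relation in $\alpha$ with integer coefficients, so transcendence forces $c_k = d_k$ for every $k$. Hence each element has a unique representation as an $\nn_0$-combination of integer powers of $\alpha$. No power $\alpha^k$ can then be written as a sum of two nonzero elements, so these powers are exactly the atoms. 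Consequently $L_\alpha$ is free on them and is a UFM. Equivalently, Proposition~\ref{prop:uf-basis} applies once the atoms are identified as the linearly independent powers.

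For (b)~$\Rightarrow$~(c), suppose $L_\alpha$ is an HFM with $\alpha \ne 1$, and assume for contradiction that $\alpha$ is algebraic.
\begin{enumerate}[(1)]
\item Since $L_\alpha$ is atomic and not a group, it is not antimatter. By Proposition~\ref{prop:laurent}(1), the atoms are exactly the powers $\alpha^k$ for $k \in \zz$, and these are pairwise distinct because $\alpha \ne 1$.
\item Proposition~\ref{prop:hf-length} then supplies a homomorphism $\ell \colon L_\alpha \to \nn_0$ with $\ell(\alpha^k) = 1$ for every $k$. Its well-definedness says that any $\nn_0$-relation $\sum c_k \alpha^k = \sum d_k \alpha^k$ satisfies $\sum c_k = \sum d_k$. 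Equivalently, every Laurent polynomial $h \in \zz[X, X^{-1}]$ with $h(\alpha) = 0$ has $h(1) = 0$.
\item Let $m$ be the minimal polynomial of $\alpha$, scaled to have integer coefficients. Then $m(\alpha) = 0$ forces $m(1) = 0$, so $X - 1$ divides $m$. Irreducibility then gives $m = c(X - 1)$, hence $\alpha = 1$, a contradiction.
\end{enumerate}
Therefore $\alpha$ is transcendental or $\alpha = 1$.

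The only delicate point is step~(1): it relies on atomicity to rule out the antimatter alternative, and on $\alpha \ne 1$ so that distinct exponents really give distinct atoms of length one. The rest is a direct augmentation argument in the spirit of Theorem~\ref{thm:augmentation}.
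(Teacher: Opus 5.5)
Your proof is correct and follows essentially the same route as the paper. For (c)~$\Rightarrow$~(a) both arguments use unique Laurent representations, and for (b)~$\Rightarrow$~(c) both use that every integer power of $\alpha$ is an atom, so half-factoriality forces $1$ to be a root of the irreducible minimal polynomial; you phrase the length comparison through the homomorphism of Proposition~\ref{prop:hf-length}, whereas the paper compares the two factorizations in $p(\alpha) = q(\alpha)$ directly, which is the same argument.
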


\begin{proof}
	The implication (a)~$\Rightarrow$~(b) is part of Figure~\ref{fig:diagram}. For (c)~$\Rightarrow$~(a), note that $L_1 = \nn_0$ and that, for transcendental $\alpha$, uniqueness of Laurent polynomial representations makes $L_\alpha$ free on the integer powers of $\alpha$.
	
	For (b)~$\Rightarrow$~(c), suppose, by way of contradiction, that $L_\alpha$ is an HFM with $\alpha \ne 1$ algebraic. Proposition~\ref{prop:laurent} makes every integer power of $\alpha$ an atom. Write a nonzero integer multiple of its minimal polynomial as $p - q$ with $p, q \in \nn_0[X]$ having disjoint supports. Positivity of $\alpha$ makes both polynomials nonzero. The equality $p(\alpha) = q(\alpha)$ gives factorizations of lengths $p(1)$ and $q(1)$. Half-factoriality forces $p(1) = q(1)$, so $1$ is a root of the irreducible minimal polynomial, contrary to $\alpha \ne 1$.
\end{proof}

Zhu proved that FF, BF, and the ACCP are equivalent for positive Laurent evaluations~\cite[Theorem~4.3]{sZ22}. Beyond Theorem~\ref{thm:fixed-length}, this supplies the implication from the ACCP to BF. Below we exhibit a Laurent FF monoid, a specific case of~\cite[Theorem~4.7]{sZ22}, whose prescribed generators include an element below $1$.

\begin{example} \label{ex:laurent-ff}
	Set $\alpha := (3 + \sqrt{5})/2$, whose conjugate is $\alpha^{-1} = (3 - \sqrt{5})/2$. For the nontrivial automorphism $\sigma$ of $\qq(\sqrt{5})$, define $w \colon L_\alpha \to \rr_{\ge 0}$ by $w(x) := x + \sigma(x)$. For every $k \in \zz$, we have $w(\alpha^k) = \alpha^k + \alpha^{-k} \ge 2$. Every nonzero element therefore has weight at least $w(1) = 2$, so $1$ is an atom. By Proposition~\ref{prop:laurent}, the atoms are precisely the powers $\alpha^k$ with $k \in \zz$. Their weights tend to infinity as $|k| \to \infty$, so Proposition~\ref{prop:weight} gives FF. However, $\alpha + \alpha^{-1} = 1 + 1 + 1$ has factorizations of distinct lengths $2$ and $3$, so $L_\alpha$ is not an HFM. Since $\alpha^{-1} < 1$, the condition $S \subseteq [1, \infty)$ in the rational characterization of Theorem~\ref{thm:rational-middle} is not necessary for positive algebraic generators. 
	\exampleqed
\end{example}

Finally,~\cite[Example~3.5]{sZ22} shows that, when $a,b\in\nn\setminus\{1\}$ and $\gcd(a,b)=1$, the Laurent evaluation monoid $L_{a/b}$ is atomic but does not satisfy the ACCP. Combining Zhu's results, we see that in the Laurent case, the six properties satisfy the following series of implications:

\begin{figure}[ht]
	\begin{tikzcd}
		\textbf{ UF } \ \arrow[r, Leftrightarrow] & \ \textbf{ HF } \arrow[r, Rightarrow] \arrow[red, r, Leftarrow, "/"{anchor=center,sloped}, shift left=1.7ex] & \ \textbf{ FF } \ \arrow[r, Leftrightarrow]	& \ \textbf{ BF } \arrow[r, Leftrightarrow] & \textbf{ ACCP } \arrow[r, Rightarrow] \arrow[red, r, Leftarrow, "/"{anchor=center,sloped}, shift left=1.7ex]  & \textbf{ Atomic }
	\end{tikzcd}
	\caption{The diagram shows implications among the six properties in Figure~\ref{fig:diagram} for the class of Laurent evaluation monoids. Red arrows mark implications that are not reversible.}
\end{figure}

\bigskip
\section{Complete Integral Closure in the Rational Setting}
\label{sec:cic}

For finite $S \subseteq \qq_{>0}$, we determine $\widehat{M_S}$ and characterize complete integral closure using the following known formula~\cite[Proposition~3.5(1)]{CGG21}. We include an elementary proof.

\begin{prop} \label{prop:cic-cone}
	Let $M$ be an additive submonoid of $\qq_{\ge 0}$. Then $\widehat{M} = \gp(M) \cap \qq_{\ge 0}$.
\end{prop}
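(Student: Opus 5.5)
We prove the two inclusions separately, treating the trivial monoid $M = \{0\}$ first: there $\gp(M) = \{0\}$ and both sides are $\{0\}$. From now on assume $M \ne \{0\}$ and fix a nonzero element $m_0 \in M$.

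For $\widehat{M} \subseteq \gp(M) \cap \qq_{\ge 0}$, take $x \in \widehat{M}$ and choose $c \in M$ with $c + nx \in M \subseteq \qq_{\ge 0}$ for every $n \in \nn$. If $x < 0$, then $c + nx < 0$ once $n > c/|x|$, a contradiction. Hence $x \ge 0$. Since $\widehat{M} \subseteq \gp(M)$ by definition, this inclusion is done.

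For the reverse inclusion, take $x \in \gp(M)$ with $x \ge 0$, and write $x = p - q$ with $p, q \in M$. The plan is to show that the single element $c := q$ works whenever $x$ is a rational multiple of an element of $M$, but a cleaner route avoids this. Write $x = r/s$ in lowest terms.

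The key observation is that for each $n$, the element $nx$ lies in $\gp(M)$. The question is whether adding a fixed $c$ lands it in $M$. The strategy is to use that $\gp(M)$ is a subgroup of $\qq$ and that $M$ contains all sufficiently large elements of a suitable cyclic subgroup. Concretely, let $g := m_0$. Then $\gp(M)$ is generated by finitely many elements of $M$ only when $M$ is finitely generated, which need not hold. So instead we work directly: $nx = n p - n q$. Writing $nx = k g + \rho$ is awkward since $\gp(M)$ may be non-cyclic.

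A better route uses the rationality of $x$ and $m_0$. Because $x \in \qq_{\ge 0}$ and $m_0 \in \qq_{>0}$, there are positive integers $N$ and $K$ with $N x = K m_0$ when $x > 0$ (the case $x = 0$ is trivial with $c = 0$). Write $n = jN + i$ with $0 \le i < N$ and $j \ge 0$. Then
\[
nx = jK m_0 + i p - i q.
\]
Set $c := (N-1) q \in M$. This gives
\[
c + nx = jK m_0 + i p + (N-1-i) q \in M,
\]
since every coefficient is a nonnegative integer. Hence $x \in \widehat{M}$.

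The only delicate point is choosing $c$ uniformly in $n$. The division of $n$ by $N$, together with absorbing the negative term $-iq$ into $(N-1)q$, handles this. Everything else is routine.
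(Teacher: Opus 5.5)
Your argument is correct, but it gets the inclusion $\gp(M) \cap \qq_{\ge 0} \subseteq \widehat{M}$ by a different mechanism than the paper.

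The paper writes $x = a - b$ and passes to the two-generated submonoid $\langle a, b \rangle$. It rescales that submonoid's Grothendieck group $\delta\zz$ to $\zz$. It then uses the fact that a submonoid of $\nn_0$ generated by coprime integers contains every sufficiently large integer (a conductor/Frobenius-number fact, which itself rests on a B\'ezout argument). Any $c$ above that threshold works, because $c + nx$ stays in $\delta\zz$ and only grows.

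You instead use commensurability of positive rationals to get $Nx = Km_0 \in M$. The division $n = jN + i$ then reduces everything to the finitely many residues $ix$ with $0 \le i < N$, and the single element $c = (N-1)q$ absorbs all their negative parts $-iq$.

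Your route is more elementary and produces an explicit $c$. It also isolates what the statement really uses. In any cancellative commutative monoid, an element of $\gp(M)$ with a positive multiple in $M$ lies in $\widehat{M}$. The only input specific to $\qq_{\ge 0}$ is that every positive element of $\gp(M)$ has such a multiple. The paper's route instead packages the argument in a standard numerical-monoid fact, and its threshold $c$ serves every positive element of $\delta\zz$ at once.

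Two clean-ups:
\begin{itemize}
\item You may take $m_0 := p$, since $x > 0$ forces $p > 0$. This makes the auxiliary $m_0$ and the separate treatment of $M = \{0\}$ unnecessary.
\item Delete the exploratory remarks before your final argument. In particular, the suggestion that $c := q$ works whenever $x$ is a rational multiple of an element of $M$ is false. In $\langle 3, 5 \rangle$ with $x = 5 - 3$ and $n = 2$, the element $q + nx = 3 + 4 = 7$ is not in the monoid.
\end{itemize}
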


\begin{proof}
	No negative $x \in \gp(M)$ belongs to $\widehat{M}$ because $c + nx$ is eventually negative for each $c \in M$. Also, $0 \in \widehat{M}$. For positive $x \in \gp(M)$, write $x = a - b$ with $a, b \in M$, and set $M_0 := \langle a, b \rangle \subseteq M$. Then $\gp(M_0) = \delta\zz$ for some $\delta \in \qq_{>0}$. Scaling by $1/\delta$ gives a monoid generated by coprime nonnegative integers, so $M_0$ contains every sufficiently large multiple of $\delta$. Choose $c \in M_0$ above this threshold. Since $x \in \delta\zz_{>0}$, it follows that $c + nx \in M_0 \subseteq M$ for every $n \in \nn$.
\end{proof}

For a finitely generated rational semiring, the denominators determine its Grothendieck group. The formula above then reduces complete integral closure to membership of a single unit fraction.

\begin{theorem} \label{thm:cic}
	Let $S = \{a_1/b_1, \ldots, a_n/b_n\}$ be a finite subset of $\qq_{>0}$, where $a_i, b_i \in \nn$ are coprime for every $i \in \{1, \ldots, n\}$, and set $D := \lcm(b_1, \ldots, b_n)$. Then $\gp(M_S) = \zz[1/D]$, and the following conditions are equivalent.
	\begin{enumerate}[(a)]
		\item $M_S$ is completely integrally closed.
		\smallskip
		
		\item $M_S = \zz[1/D]_{\ge 0}$.
		\smallskip
		
		\item $1/D \in M_S$.
	\end{enumerate}
	Moreover, if $D > 1$ and these conditions hold then $M_S$ is antimatter.
\end{theorem}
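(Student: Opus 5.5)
First I compute the Grothendieck group. By Proposition~\ref{prop:group}, $\gp(M_S) = \zz[S]$, so it suffices to show $\zz[S] = \zz[1/D]$. For the inclusion $\zz[S] \subseteq \zz[1/D]$, note that each $a_i/b_i$ lies in $\zz[1/D]$ because $b_i \mid D$, and $\zz[1/D]$ is a ring. For the reverse inclusion it suffices to show $1/D \in \zz[S]$, since $\zz[S]$ is then a ring containing $1/D$. Because $\gcd(a_i, b_i) = 1$, choose integers $u_i, v_i$ with $u_i a_i + v_i b_i = 1$. Then
\[
	\frac{1}{b_i} = u_i \frac{a_i}{b_i} + v_i \in \zz[S].
\]
The ideal of $\zz$ generated by the cofactors $D/b_i$ is all of $\zz$: any prime dividing every $D/b_i$ would divide $D$ with full multiplicity missing from each $b_i$, contradicting $D = \lcm(b_i)$. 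Hence an integer combination $\sum_i c_i (D/b_i) = 1$ exists, and dividing by $D$ gives $1/D = \sum_i c_i/b_i \in \zz[S]$.

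For the equivalences, Proposition~\ref{prop:cic-cone} gives $\widehat{M_S} = \zz[1/D] \cap \qq_{\ge 0} = \zz[1/D]_{\ge 0}$, so (a) and (b) are equivalent. The implication (b)~$\Rightarrow$~(c) is immediate. For (c)~$\Rightarrow$~(b), observe that $M_S$ is closed under multiplication because it is a semiring. If $1/D \in M_S$, then $1/D^k = (1/D)^k \in M_S$ for every $k$, and so $m/D^k \in M_S$ for every $m \in \nn_0$. This yields $\zz[1/D]_{\ge 0} \subseteq M_S$. The reverse inclusion holds because $M_S \subseteq \gp(M_S) \cap \qq_{\ge 0}$.

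For the final claim, suppose $D > 1$ and the conditions hold. Then $1 = D \cdot (1/D)$ is a sum of $D \ge 2$ nonzero elements of $M_S$. Since $M_S$ is reduced, $1$ is not an atom, and Lemma~\ref{lem:atoms}(2) gives $\A(M_S) = \varnothing$. Alternatively, one can argue directly: every $x \in \zz[1/D]_{>0}$ splits as $D$ copies of $x/D$, so no element is an atom.

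The only nonroutine step is showing $1/D \in \zz[S]$, which requires combining the B\'ezout identities for the individual denominators into one for their least common multiple; I expect that to be the main point to handle carefully.
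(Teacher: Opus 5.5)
Your proof is correct and takes essentially the same route as the paper. Both use B\'ezout to get $1/b_i \in \zz[S]$, Proposition~\ref{prop:cic-cone} for (a)$\Leftrightarrow$(b), powers of $1/D$ for (c)$\Rightarrow$(b), and splitting into $D$ copies for antimatterness. The only difference is that the paper obtains $1/D$ multiplicatively, as $\bigl(\prod_i b_i/D\bigr)\prod_i (1/b_i)$, which avoids your extra argument that the cofactors $D/b_i$ have $\gcd$ equal to $1$.
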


\begin{proof}
	For each $i \in \{1, \ldots, n\}$, B\'ezout's identity gives $u, v \in \zz$ with $u a_i + v b_i = 1$, so $1/b_i = u(a_i/b_i) + v \in \zz[S]$. Multiplying $\prod_{i = 1}^n (1/b_i)$ by the integer $(\prod_{i = 1}^n b_i)/D$ gives $1/D \in \zz[S]$. Since $b_i \mid D$ for every $i$, we obtain $\zz[S] = \zz[1/D]$, and Proposition~\ref{prop:group} gives $\gp(M_S) = \zz[1/D]$.
	
	Proposition~\ref{prop:cic-cone} gives the equivalence of~(a) and~(b), and~(b) implies~(c). If~(c) holds then $m/D^k = m(1/D)^k \in M_S$ for all $m, k \in \nn_0$, giving~(b), since the reverse inclusion always holds. Finally, if $D > 1$ and~(c) holds then every positive $x \in M_S$ is the sum of $D$ copies of $x/D = x \cdot (1/D) \in M_S$, so $M_S$ is antimatter.
\end{proof}

Note that the theorem can be extended to apply to negative generators. When $M_S$ is a group, it provides a concise characterization of the monoid. Specifically, if a finite set $S$ of rationals contains a negative number, then $M_S = \zz[S]$ is a group by Proposition~\ref{prop:conjugates}. Hence $M_S = \zz[1/D]$, where $D$ is the least common multiple of the reduced denominators.
\smallskip

When a prime divides every reduced numerator, the required unit fraction is not in $M_S$, which then fails to be completely integrally closed.

\begin{corollary} \label{cor:common-numerator}
	Let $S$ be a finite subset of $\qq_{>0}$ with $S \not\subseteq \nn$. If some prime $p$ divides the reduced numerator of every element of $S$ then $M_S$ is not completely integrally closed.
\end{corollary}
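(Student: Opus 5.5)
By Theorem~\ref{thm:cic}, it suffices to show that $1/D \notin M_S$, where $D$ is the least common multiple of the reduced denominators $b_1, \ldots, b_n$. The hypothesis $S \not\subseteq \nn$ gives some $b_i > 1$, so $D > 1$ and $1/D$ is a genuine fraction; our aim is therefore to rule out membership of $1/D$ by a $p$-adic argument.

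The key observation is that every element of $S$ has positive $p$-adic valuation. Since $p$ divides each reduced numerator $a_i$ and $\gcd(a_i, b_i) = 1$, we have $v_p(a_i/b_i) = v_p(a_i) \ge 1$, and in particular $p \nmid b_i$ for every $i$, so $p \nmid D$. Hence every monomial value $s_1^{e_1} \cdots s_n^{e_n}$ has $p$-adic valuation at least $0$, with equality only for the empty monomial $1$. By the ultrametric inequality, every element of $M_S$, being a finite sum of monomial values with nonnegative integer coefficients, has nonnegative $p$-adic valuation; moreover $M_S \subseteq \zz[1/D]$, and all of its elements are $p$-integral. This alone does not exclude $1/D$, since $v_p(1/D) = 0$, so a finer invariant is needed.

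The refinement is to reduce modulo $p$. The ring $\zz_{(p)}$ of $p$-integral rationals surjects onto $\mathbb{F}_p$, and each generator $a_i/b_i$ maps to $0$, so every $f(S)$ with $f \in \nn_0[x_1, \ldots, x_n]$ maps to the constant coefficient $f(0, \ldots, 0) \in \nn_0$ reduced modulo $p$. The main step is to compare this with the real size of the element. If $1/D = f(S)$, then the constant term $c := f(0, \ldots, 0)$ satisfies $c \le f(S) = 1/D < 1$ by positivity, so $c = 0$. Then $1/D$ maps to $0$ in $\mathbb{F}_p$. On the other hand, $D$ is a unit modulo $p$, so $1/D$ maps to $D^{-1} \ne 0$, which is a contradiction. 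This yields $1/D \notin M_S$, and Theorem~\ref{thm:cic} then shows that $M_S$ is not completely integrally closed.

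I expect the only delicate point to be the observation that the valuation bound alone is insufficient, and that one must combine the positivity bound on the constant term with the reduction map to $\mathbb{F}_p$. Everything else is routine: the well-definedness of the reduction map on $\zz_{(p)}$, and $p \nmid D$, which follows from coprimality of the reduced fractions.
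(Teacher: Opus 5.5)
Your proof is correct and is essentially the paper's argument: positivity forces the constant term of $f$ to vanish, and your reduction to $\mathbb{F}_p$ is a rephrasing of the paper's step that the remaining monomials have positive $p$-adic valuation, so the ultrametric inequality gives $v_p(1/D) > 0$, contradicting $p \nmid D$. The valuation argument therefore does not need a finer invariant; it only needs to be applied after the constant term has been removed, which is what the paper does.
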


\begin{proof}
	Let $D$ be the least common multiple of the reduced denominators of the elements of $S$. Since $S \not\subseteq \nn$, we have $D > 1$; coprimality of each reduced numerator and denominator gives $p \nmid D$. If $1/D = f(S)$ for a polynomial $f$ with nonnegative integer coefficients then $1/D < 1$ forces its constant term to be $0$. Every nonconstant monomial value has positive $p$-adic valuation, so the ultrametric inequality gives $v_p(1/D) > 0$, contradicting $p \nmid D$. Thus, $1/D \notin M_S$, and Theorem~\ref{thm:cic} applies.
\end{proof}

For positive generators, adjoining a suitable unit fraction can supply the denominators needed for complete integral closure.

\begin{corollary} \label{cor:unit-fraction}
	Let $q = a/b$, where $a, b \in \nn$ are coprime, and let $n \ge 2$ be an integer. If every prime divisor of $a$ divides $n$, then $M_{\{q, 1/n\}}$ is completely integrally closed.
\end{corollary}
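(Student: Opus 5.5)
By Theorem~\ref{thm:cic} applied to $S = \{q, 1/n\}$, it suffices to show that $1/D \in M_S$, where $D = \lcm(b, n)$. The plan is to write $1/D$ explicitly as an $\nn_0$-combination of products of powers of $q$ and $1/n$. The natural route is to first produce the unit fraction $1/b$, then combine it with powers of $1/n$ to reach $1/D$.

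Step one is to get $1/b$ into $M_S$. Write $a = \prod p_j^{k_j}$. Every prime dividing $a$ divides $n$, so $a \mid n^K$ for $K$ large (for instance $K = \max_j k_j$). Then
\[
	\frac{1}{b} = \frac{a}{b} \cdot \frac{1}{a} = q \cdot \frac{n^K/a}{n^K} = \frac{n^K}{a}\, q \left(\frac{1}{n}\right)^K ,
\]
and $n^K/a \in \nn$, so $1/b \in M_S$. If $a = 1$ this step is immediate.

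Step two is to pass from $1/b$ and the powers $1/n^k$ to $1/D$. Let $g = \gcd(b, n)$, so $D = bn/g$. Since $\gcd(b/g,\, n/g) = 1$, choose $u, v \in \zz$ with $u\,(n/g) + v\,(b/g) = 1$. This gives $1/D = u/b + v/n$, but $u$ and $v$ may be negative, so this identity alone does not place $1/D$ in $M_S$.

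The main obstacle is exactly this positivity issue. The first remedy to try is to avoid Bézout and instead show that $1/D$ is a nonnegative integer multiple of some product $(1/b)^i (1/n)^j$. This works whenever $D$ divides $b^i n^j$, and that divisibility always holds with $i = j = 1$, since $D = \lcm(b,n) \mid bn$. Then
\[
	\frac{1}{D} = \frac{bn}{D} \cdot \frac{1}{b} \cdot \frac{1}{n},
\]
where $bn/D = g \in \nn$ and $(1/b)(1/n) \in M_S$ by closure under multiplication. So $1/D \in M_S$, and Theorem~\ref{thm:cic}, condition~(c), yields complete integral closure. The only step that needs checking is the divisibility $a \mid n^K$, and that is precisely where the hypothesis on the prime divisors of $a$ enters.
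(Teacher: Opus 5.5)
Your proof is correct and matches the paper's argument. You use $a \mid n^K$ to get $1/b = (n^K/a)\,q\,(1/n)^K \in M_{\{q,1/n\}}$, then multiply $(1/b)(1/n)$ by the integer $bn/\lcm(b,n)$ to obtain $1/\lcm(b,n)$ and invoke Theorem~\ref{thm:cic}(c); the abandoned Bézout detour in your Step two is unnecessary and can simply be deleted.
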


\begin{proof}
	Choose $k \in \nn_0$ such that $a \mid n^k$. Then $(n^k/a)q(1/n)^k = 1/b$ belongs to $M_{\{q, 1/n\}}$, so $1/(bn)$ also belongs to this monoid. Multiplying by the integer $bn/\lcm(b, n)$ gives $1/\lcm(b, n) \in M_{\{q, 1/n\}}$, and Theorem~\ref{thm:cic} applies.
\end{proof}

The following example illustrates how a sum of the original generators can produce a unit fraction that generates the entire semiring.

\begin{example} \label{ex:cic-antimatter}
	The identity $1/25 + 3/50 = 1/10$ gives $\zz[1/10]_{\ge 0} = M_{\{1/10\}} \subseteq M_{\{1/25, 3/50\}}$. The reverse inclusion holds because $1/25 = 4/100$ and $3/50 = 6/100$ belong to the semiring $\zz[1/10]_{\ge 0}$. Therefore
	\[
		M_{\{1/25, 3/50\}} = \zz[1/10]_{\ge 0} = M_{\{1/10\}},
	\]
	which is a completely integrally closed antimatter monoid by Theorem~\ref{thm:cic}.
	\exampleqed
\end{example}

\bigskip
\section{Summary and Questions}
\label{sec:questions}

Table~\ref{tab:examples} locates our examples in Figure~\ref{fig:diagram}. For every finite subset $S$ of $\rr_{>0}$, Theorem~\ref{thm:fixed-length} reduces the chain of factorization implications to that shown in Figure~\ref{fig:rev-diagram}.

\begin{table}[htbp]
	\centering
	\small
	\begin{tabular}{@{}p{0.42\textwidth}p{0.48\textwidth}@{}}
		\toprule
		Generating set or monoid & Proved behavior \\
		\midrule
		Algebraically independent $S$ & UF, hence all six properties \\
		$\{t, t^2 - t + 1\}$ with $t > 1$ transcendental & HF and FF, but not UF \\
		$\{1 + \sqrt{2}, 1 + 2\sqrt{2}\}$ & HF and FF, but not UF \\
		$\{3/2\}$ & FF, BF, ACCP, and atomic, but not HF \\
		$L_{(3 + \sqrt{5})/2}$ & FF, BF, ACCP, and atomic, but not HF \\
		$\{2/3\}$ & Atomic, but not ACCP \\
		$\{2/3, 2/5\}$ and $\{3/2, 3/5\}$ & Nonempty set of atoms, but not atomic \\
		The pair in Example~\ref{ex:axes} & All pure powers are atoms, but not atomic \\
		$\{2/3, 3/4\}$ & Antimatter \\
		\bottomrule
	\end{tabular}
	\vspace{\abovecaptionskip}
	\caption{Positions of the examples of this paper relative to the diagram in Figure~\ref{fig:diagram}.}
	\label{tab:examples}
\end{table}

\begin{figure}[ht]
	\begin{tikzcd}
		\textbf{ UF } \ \arrow[r, Rightarrow] \arrow[red, r, Leftarrow, "/"{anchor=center,sloped}, shift left=1.7ex] & \ \textbf{ HF } \arrow[r, Rightarrow] \arrow[red, r, Leftarrow, "/"{anchor=center,sloped}, shift left=1.7ex] & \ \textbf{ FF } \ \arrow[r, Leftrightarrow]	& \ \textbf{ BF } \arrow[r, Rightarrow] & \textbf{ ACCP } \arrow[r, Rightarrow] \arrow[red, r, Leftarrow, "/"{anchor=center,sloped}, shift left=1.7ex]  & \textbf{ Atomic }
	\end{tikzcd}
	\caption{The implications in the diagram show inclusions of the six properties in Figure~\ref{fig:diagram} for the class of additive monoids of finitely generated positive semirings. Red arrows mark implications that are not reversible.}
    \label{fig:rev-diagram}
\end{figure}
\smallskip

The first, second, and last one-way implications are strict by Examples~\ref{ex:quadratic-hf}, \ref{ex:ff-not-hf}, and~\ref{ex:atomic-notaccp}. The remaining question is whether the ACCP implies BF. This holds for one-generator evaluations~\cite[Theorem~4.11]{CG22}, Laurent evaluations~\cite[Theorem~4.3]{sZ22}, and finite sets of positive rational generators by Theorem~\ref{thm:rational-middle}.

\begin{question} \label{q:accp-bf-ff}
	If $S$ is a finite subset of $\rr_{>0}$ and $M_S$ satisfies the ACCP, must $M_S$ be a BFM, and hence an FFM?
\end{question}

The rational examples also raise the problem of characterizing atomicity arithmetically.

\begin{question} \label{q:atomicity-criterion}
	Find a necessary and sufficient arithmetic criterion for atomicity of $M_S$ for finite $S \subseteq \qq_{>0}$.
\end{question}

When $S$ consists of algebraic elements, a concise criterion seems unlikely, and certainly none as simple as the single-generator case described in~\cite[Corollary~4.6]{CGLY26}.
\smallskip

Finally, for algebraic generators specified by polynomials and real roots, the condition in Corollary~\ref{cor:embedding} is decidable: enumerate the real embeddings of $\qq(S)$ and test $\sigma(s) \ge 1$ for every $s \in S$. The question of when more general additive weights exist remains open.

\begin{question} \label{q:certificates}
	Which finite sets $S$ of positive algebraic numbers admit an additive weight $w \colon M_S \to \rr_{\ge 0}$ and a constant $\delta > 0$ such that $w(x) \ge \delta$ for every nonzero $x \in M_S$? Can one decide existence from defining polynomials and specified roots and construct a weight when one exists?
\end{question}

\bigskip
\section*{Acknowledgments}

The authors would like to thank the PRIMES-USA program at MIT for supporting their collaboration.

\bigskip

\end{document}